\newcommand{\RR}{{\mathbb R}}
\newcommand{\re}{\mathbb{R}}
\newcommand{\N}{\mathbb{N}}
\newcommand{\cal}{\mathcal}
\def\af{\alpha}
\newcommand{\ud}{\mathrm{d}}
\newcommand{\td}[1]{\tilde{#1}}
\newcommand{\rank}{\mbox{\upshape rank}}
\newcommand{\sos}{\mbox{\upshape\tiny sos}}
\newcommand{\mom}{\mbox{\upshape\tiny mom}}
\newcommand{\Qm}{\mathcal{Q}}
\newcommand{\Qk}{\mathcal{Q}_k}
\newcommand{\wt}[1]{\widetilde{#1}}
\newcommand{\So}{\wt{S}_>}
\newcommand{\St}{\wt{S}}
\newtheorem{theorem}{Theorem}[section]
\newtheorem{prop}[theorem]{Proposition}
\newtheorem{condition}[theorem]{Condition}
\newtheorem{lemma}[theorem]{Lemma}
\newtheorem{cor}[theorem]{Corollary}
\newtheorem{ass}[theorem]{Assumption}
\newtheorem{define}[theorem]{Definition}
\newtheorem{definition}[theorem]{Definition}
\theoremstyle{definition}
\newtheorem{example}[theorem]{Example}
\newtheorem{remark}[theorem]{Remark}
\newenvironment{examplerv}[1]{\refstepcounter{theorem}
\par\noindent{\bf Example #1 revisited}. \ignorespaces}{}
\numberwithin{equation}{section}
\begin{document}
\title[SDP relaxations for linear semi-infinite polynomial programming]
{Semidefinite programming
relaxations for linear semi-infinite polynomial programming}
\author{Feng Guo}
\address{School of Mathematical Sciences, Dalian  University of Technology,
Dalian, 116024, China}
\email{fguo@dlut.edu.cn}

\author{Xiaoxia Sun}
\address{School of Mathematics,
Dongbei University of Finance and Economics,
Dalian, 116025, China}
\email{xiaoxiasun@dufe.edu.cn}

\begin{abstract}
	This paper studies a class of so-called
	linear semi-infinite polynomial programming (LSIPP) problems.
	It is a subclass of linear semi-infinite programming problems
	whose constraint functions are polynomials in parameters
	and index sets are basic semialgebraic sets.
	We present a hierarchy of semidefinite programming (SDP)
	relaxations for LSIPP problems. Convergence rate analysis of the
	SDP relaxations is established based on some existing results. We
	show how to verify the compactness of feasible sets of LSIPP
	problems. In the end, we extend the SDP relaxation method to more
	general semi-infinite programming problems.
	%with compact or noncompact index sets.
	%is constructed.
	%under the assumption that the
	%When the index set of an LSIPP problem is compact,
	%a convergent hierarchy of semidefinite programming (SDP)
	%relaxations is constructed under the assumption that the
	%Slater condition and the Archimedean property hold.
	%Compared with existing methods, in dealing with LSIPP problems,
	%the SDP relaxation method is more computationally efficient.
	%When the index set is noncompact, we use the technique of
	%homogenization to equivalently convert the LSIPP problem into
	%compact case under some generic assumption.
%	Consequently, a corresponding hierarchy of SDP relaxations for
%	noncompact LSIPP problems is obtained. We apply this
%	relaxation approach to
%	the special LSIPP problem reformulated from a polynomial optimization
%	problem.
%	As a byproduct,
%	a new SDP relaxation method is derived for solving the class of polynomial
%	optimization problems whose objective polynomials are stably bounded
%	from below on noncompact feasible sets.
\end{abstract}
\date{}
\maketitle

\noindent {\bf Key words} \, linear semi-infinite
programming, semidefinite programming relaxations, sum of squares,
polynomial optimization

\bigskip
\noindent {\bf AMS subject classification} \, 65K05, 90C22

\section{introduction}
We consider the following
{\itshape linear semi-infinite polynomial programming}
(LSIPP) problem
\begin{equation}\label{eq::lsipp}
(P)\qquad	
\left\{
\begin{aligned}
	p^*:=\inf_{x\in\RR^m}&\ c^Tx\\
	\text{s.t.}&\ a(y)^Tx+b(y)\ge 0,\ \ \forall y\in S\subseteq\RR^n,
\end{aligned}\right.
\end{equation}
where $c\in\RR^m$, $b(Y)\in\RR[Y]:=\RR[Y_1,\ldots,Y_n]$ the
polynomial ring in $Y$ over the real field,
$a(Y)=(a_1(Y),\ldots,a_m(Y))^T\in\RR[Y]^m$, and the {\itshape index set}
$S$ is a basic semialgebraic set defined by
\begin{equation}
	S:=\{y\in\RR^n\mid g_1(y)\ge 0, \ldots, g_s(y)\ge 0\},
	\label{eq::S}
\end{equation}
where $g_j(Y)\in\RR[Y]$, $j=1,\dots,s$.
Lowercase letters (e.g. $x, y, w$) are hereinafter used for denoting
points in a space while uppercase letters (e.g. $X,Y,W$) for the
corresponding variables.
In this paper, we assume that
$(\ref{eq::lsipp})$ is feasible and bounded from below, i.e.,
$-\infty<p^*<\infty$.
Note that
%the linearity of the objective and constraint functions does not
%make it easy to solve
the problem $(\ref{eq::lsipp})$ is NP-hard.
%since the index set $S$ is infinite. In fact,
Indeed, it is obvious that the
%polynomial optimization
problem of
minimizing a polynomial $f(Y)\in\RR[Y]$ over $S$ can be regarded
as a special LSIPP problem (see Example~\ref{ex::stable}).
As is well known, the polynomial optimization problem
%special LSIPP
is NP-hard even when $n>1$, $f(Y)$ is a nonconvex
quadratic polynomial and $g_j(Y)$'s are linear \cite{PardalosVavasis1991}.
Hence, a general LSIPP problem
%$(\ref{eq::lsipp})$
cannot
be expected to be solved in polynomial time unless P=NP.

LSIPP can be seen as a special branch of
{\itshape linear semi-infinite programming} (LSIP), or more general, of
{\itshape semi-infinite programming} (SIP), in which the involved functions
are not necessarily polynomials.
%which has many
%applications, e.g., Chebyshev approximation, maneuverability problems, some
%mathematical physics problems and so on \cite{Hettich1993,Still2007}.
%Due to its many applications and appealing theoretical properties,
%SIP has become an independent and active research area since 1960s
%and a large amount of work has been done on it.
%In numerical aspect,
%LSIP, or more general
Numerically, SIP problems can be solved by different approaches
%which can be classified into
including, for instance, discretization methods,
local reduction methods, exchange methods,
simplex-like methods and so on.
%For more details,
See the surveys
\cite{LSIO,LSIP,Still2007}
and the references therein for details.
%There are two main difficulties in numerical treatment of general SIP problems.
One of main difficulties in numerical treatment of general SIP problems
is that the feasibility test of  a point $\bar{u}\in\RR^m$
is equivalent to {\itshape globally} solving the problem of minimizing the
constraint function with fixed $\bar{u}$ over the index set,
which is called the lower level subproblem.
%Some algorithms even require the computation of all
%global minima of the lower level problems which is only possible under strong
%assumptions on the constraint functions and index sets, e.g., that the index sets
%are box-shaped.
%The other main
%difficulty is the possibility that the active index sets of all feasible
%solutions are empty, which is why constraint qualifications are needed.
%Generally, the stopping rules of the above four kinds of methods
%provide {\itshape infeasible} points and hence {\itshape lower} bounds of
%SIP problems.
%By replacing the semi-infinite constraint by some overestimation, or successively
%increasing restrictions imposed on the right-hand side of discretized constraints,
%some approaches which generate {\itshape feasible} points and {\itshape upper}
%bounds are recently proposed in
%\cite{Djelassi2017,ACA2012,WY2015}.
%and references therein.
Typically,
when solving SIP problems by existing methods in the literature,
the main difficulty lies in solving
%difficulty of
the nonlinear lower level subproblems
%which need to be solved
at each iteration.

LSIPP, as a special subclass of SIP,
has many applications like minimax problems,
functional approximation problems. However,
%(see examples in the Appendix),
to the best of our knowledge, few of the numerical methods mentioned above
are specially designed
%for SIPP or LSIPP problems
by exploiting features of polynomial optimization problems.
Parpas and Rustem \cite{PR2009} proposed a
discretization-like method to solve minimax polynomial optimization
problems, which can be reformulated as
semi-infinite polynomial programming (SIPP)
problems.
Using polynomial approximation and an appropriate hierarchy of
{\itshape semidefinite programming} (SDP) relaxations,
Lasserre presented an
algorithm to solve the generalized SIPP problems in
\cite{Lasserre2011}. Based on an exchange scheme, an SDP
relaxation method for solving
SIPP problems was proposed in \cite{SIPPSDP}.
By using representations of nonnegative polynomials in the
univariate case, an SDP method was given in \cite{XSQ}
for LSIPP problems $(\ref{eq::lsipp})$ with $S$ being closed
intervals.

%$S$ in $X$ variables.
%For SIPP or LSIPP problems, the feasibility test of a given point
%is equivalent to a global polynomial
%optimization problem. Due to representations of nonnegative polynomials
%as sums of squares and the dual theory of moments,
%a hierarchy of SDP relaxations can be constructed to approximate
%polynomial optimization problems,
%see \cite{Lasserre09,Laurent_sumsof,PPSOSMarshall,parriloSturmfels,RTAL2013}
%and the references therein.
%More precisely,
%we associate the set $S$ with a so-called {\itshape quadratic
%module} which is a set of polynomials generated by
%$g_j(Y)$'s. If the {\itshape Archimedean} property (Definition \ref{def::AC})
%holds, then Putinar's Positivstellensatz \cite{Putinar1993}
%states that any polynomial positive over
%$S$ belongs to the quadratic module. According to
%Schm{\"{u}}dgen's Positivstellensatz \cite{Schmugen1991},
%if $S$ is compact but the Archimedean property fails, we can
%replace in the above statement
%the quadratic module by the {\itshape preordering} generated
%by $g_j(Y)$'s.
%These representations of positive polynomials can be
%reduced to SDP feasibility problems. An SDP problem can be solved
%by interior-point method to a given accuracy in polynomial time
%\cite{vandenberghe96semidefinite,handbookSDP}.
%For this reason and in the light of the wide applications of LSIPP,
%%it inspires us to investigate
%it is worthy extending the powerful SDP relaxation approach to LSIPP problems.
%%$(\ref{eq::lsipp})$.

%\vskip 5pt
In this paper, we propose a hierarchy of SDP relaxations for LSIPP
\eqref{eq::lsipp}. 
The dual problem of LSIPP is a special case of the generalized moment
problems (Section~\ref{sec::dual}),  which has been well investigated, see
\cite{CMP1965,AK1962,MPTAT,K1980,MM,Lasserre2008,Nie201501} and the
references therein.
Lasserre \cite{Lasserre2008} proposed an SDP relaxation method for
generalized moment problems based on Putinar's Positivstellensatz
\cite{Putinar1993}. 
Although the SDP relaxations presented in this paper can be seen as 
the dual of Lasserre's relaxations for GPM, 
%for \eqref{eq::lsipp} is considered (Section~\ref{section::SDPrelax}),
they are of their independent interest because of the following
desirable features they enjoy. First, some (approximate)
minimizers of \eqref{eq::lsipp} can be extracted by these SDP
relaxations, which is very useful in some applications, like
functional approximation problems (Example~\ref{ex::af}); Second,
convergence rate of these SDP relaxations can be estimated
(Section~\ref{sec::cra}) by using the
complexity analysis of Putinar's Positivstellensatz in
\cite{NieSchweighofer}; Third, these SDP relaxations can be
easily extended to more general semi-infinite programming problems
(Section~\ref{sec::extension}), like problems of the form
\eqref{eq::lsipp} with semialgebraic functions, or with s.o.s-convex
objectives. As the feasible set of \eqref{eq::lsipp} is assumed to be
compact in the convergence rate estimation, we also show that the
compactness can be verified by
computing a positive lower bound of the infima of several LSIPP
problems. It can be done by the proposed SDP relaxations if the
finite convergence happens (in particular, if $S$ is a closed and
bounded interval (Section~\ref{sec::compacttest})). 

\vskip 5pt
%\end{itemize}
This paper is organized as follows.
%After introducing some notation, we
%recall some background
%about sums of squares of polynomials and the dual theory of moment matrices
We introduce some notation and preliminaries
in Section \ref{sec::pre}. 
%Depending on whether $S$ is compact or not,
%two SDP relaxation methods are given for $(\ref{eq::lsipp})$
%and a stopping criterion
%are proposed
SDP relaxations of LSIPP problems and the convergence rate analysis is
given in Section \ref{section::SDPrelax}, where we also discuss how to
verify the compactness of feasible sets of LSIPP problems. In
Section~\ref{sec::extension}, we extend the SDP relaxation method to
more general semi-infinite programming problems.
%In Section \ref{sec::special}, we consider the application of
%the proposed relaxation approach to the special LSIPP problem reformulated
%from a polynomial optimization problem.
Some conclusions are made in Section \ref{sec::conclusion}.
%Some numerical experiments are given in Section
%\ref{sec::numerical}.
%to show the efficiency of the SDP relaxations.

\section{Notation and Preliminaries}\label{sec::pre}

Here is some notation used in this paper.
The symbol $\N$ (resp., $\re$) denotes
the set of nonnegative integers (resp., real
numbers). For any $t\in \re$, $\lceil t\rceil$
denotes the smallest integer that is not smaller than $t$.
%For integer $n>0$, $[n]$ denotes the set $\{1,\cdots,n\}$.
For $y=(y_1,\ldots,y_n)\in \re^n$,
$\Vert y\Vert_2$ denotes the standard Euclidean norm
of $y$.
%$x_i$ denotes the $i$-th component of $x$.
For $\alpha=(\alpha_1,\ldots,\alpha_n)\in\N^n$,
%let
$\Vert\alpha\Vert_1:=\alpha_1+\cdots+\alpha_n$.
For $k\in\N$, denote $\N^n_k=\{\alpha\in\N^n\mid\Vert\alpha\Vert_1\le k\}$.
For $y \in \re^n$ and $\af \in \N^n$, $y^\af$ denotes
$y_1^{\af_1}\cdots y_n^{\af_n}$.
%For a finite set $T$, $|T|$ denotes its cardinality.
$\mathbb{R}[Y] = \mathbb{R}[Y_1,\cdots,Y_n]$ denotes
the ring of polynomials in $(Y_1,\cdots,Y_n)$ with real
coefficients. For $k\in\N$, denote by $\RR[Y]_k$ the set of polynomials
in $\RR[Y]$ of total degree up to $k$.
For a symmetric matrix $Z$, $Z\succeq 0 (\succ
0)$ means that $Z$ is positive semidefinite (definite).
For $m\in\N$, $\RR^{m\times m}$ denotes the set of $m\times m$ real
matrices and $\mathbb{S}_+^m\subset\RR^{m\times m}$ denotes its subset of positive
semidefinite matrices. 
For two symmetric matrices $A, B$ of the same size,
$\langle A, B\rangle$ denotes the inner product of $A$
and $B$.

\subsection{Sums of squares and moments}
We recall some background about
{\itshape sums of squares} (s.o.s) of polynomials
and the dual theory of {\itshape moment matrices}.
For any $f(Y)\in\RR[Y]_k$, let  $\bf{f}$ denote its column vector of
coefficients  in the canonical monomial basis of
$\RR[Y]_k$. A polynomial $f(Y)\in\RR[Y]$ is said to be a
%{\itshape sum of squares of polynomials} (s.o.s.)
sum of squares of polynomials if it can be written as $f(Y)=\sum_{i=1}^t
f_i(Y)^2$ for some $f_1(Y),\ldots,f_t(Y)\in\RR[Y]$.  The symbol
$\Sigma^2[Y]$ denotes the set of polynomials that are s.o.s.

Let $G:=\{g_1,\ldots,g_s\}$ be the set of polynomials that define the
semialgebraic set $S$ $(\ref{eq::S})$.  We denote by
\[
  \mathcal{Q}(G):=\left\{\sum_{j=0}^s\sigma_jg_j\ \Big|\ g_0=1,\
\sigma_j \in \Sigma^2[Y], j=0,1,\ldots,s \right\}
\]
the {\itshape quadratic module} generated by $G$ and denote by
\[
\Qk(G):=\left\{\sum_{j=0}^s\sigma_jg_j\ \Big|\ g_0=1,\
\sigma_j \in \Sigma^2[Y], \,
\deg(\sigma_jg_j)\le 2k, j=0,1,\ldots,s\right\}
\]
its  $k$-th {\itshape quadratic module}.
It is clear that if $f\in\mathcal{Q}(G)$, then
$f(y)\ge 0$ for any $y\in S$. However, the converse is not necessarily
true, see Example \ref{ex::counterex}.
Note that checking whether $f\in\mathcal{Q}_k(G)$ for a fixed $k\in\N$ is an
SDP feasibility problem \cite{LasserreGlobal2001}.

For $k\in\N$, denote $s(k):={n+k\choose n}$.
Consider a finite sequence of real numbers
$z:=(z_{\alpha})_{\alpha \in \mathbb{N}^n_{2k}}\in\RR^{s(2k)}$
whose elements are indexed by $n$-tuples $\alpha\in\N^n_{2k}$. $z$ is called
a {\itshape truncated
moment sequence} up to order $2k$ if there exists
a Borel measure $\mu$ on $\RR^n$ such that
\[
	z_{\alpha}=\int Y^{\alpha}\ud\mu(y),\ \forall \alpha\in\N^n_{2k}.
\]
In this case, we say that $z$ has a {\itshape representing measure} $\mu$.
The associated $k$-th {\itshape moment matrix} is the matrix  $M_k(z)$
indexed by $\mathbb{N}^n_{k}$, with
$(\alpha,\beta)$-th entry $z_{\alpha+\beta}$ for $\alpha, \beta \in
\mathbb{N}^n_{k}$.  Given a polynomial $f(Y)=\sum_{\alpha}f_{\alpha}Y^{\alpha}$,
for $k\ge d_f:=\lceil \deg(f)/2\rceil$, the $(k-d_f)$-th {\itshape localizing
moment matrix} $M_{k-d_f}(fz)$ is defined as the moment matrix of the
{\itshape shifted vector} $((fz)_{\alpha})_{\alpha\in\mathbb{N}^n_{2(k-d_f)}}$
with $(fz)_{\alpha}=\sum_{\beta}f_{\beta}z_{\alpha+\beta}$.
${\mathscr{M}_{2k}}$ denotes the space of all sequences
$z=(z_{\alpha})_{\alpha \in \mathbb{N}^n_{2k}}\in\RR^{s(2k)}$ with
order at most $2k$.  For any $z\in\mathscr{M}_{2k}$, the corresponding
Riesz functional
$\mathscr{L}_{z}$ on $\RR[Y]_{2k}$ is  defined by
\[
\mathscr{L}_z\left(\sum_{\alpha}q_{\alpha}Y_1^{\alpha_1}\cdots
Y_n^{\alpha_n}\right)
:=\sum_{\alpha}q_{\alpha}z_{\alpha},\quad \forall q(Y)\in\RR[Y]_{2k}.
\]
From the definition of the localizing moment
matrix $M_{k-d_f}(fz)$, it is easy to check that
\begin{equation}\label{eq::M}
\mathbf{q}^TM_{k-d_f}(fz)\mathbf{q}=\mathscr{L}_z(f(Y)q(Y)^2),\quad \forall
q(Y)\in\RR[Y]_{k-d_f}.
\end{equation}
Let $d_j:=\lceil\deg(g_j)/2\rceil$ for each $j=1,\ldots,s$.
For any $v\in S$, let $\zeta_{2k,v}:=[v^{\alpha}]_{\alpha\in\N^n_{2k}}$
be the {\itshape Zeta vector} of $v$ up to degree $2k$, i.e.,
\[
	\zeta_{2k,v}=
	[1\quad v_1\quad \cdots\quad v_n\quad
		v_1^2\quad v_1v_2\quad \cdots\quad v_n^{2k}].
\]
Then, $M_k(\zeta_{2k,v})\succeq 0$ and $M_{k-d_j}(g_j\zeta_{2k,v})\succeq 0$
for $j=1,\ldots,s$. In fact, let $g_0=1$, then for each $j=0,1,\ldots,s$,
\[
	\begin{aligned}
		%\mathbf{q}^TM_{k}(y^{(v)})\mathbf{q}
		%&=\mathscr{L}_y(q(X)^2)=q(v)^2\ge 0, \ \forall
		%q(X)\in\RR[X]_{k},\\
		\mathbf{q}^TM_{k-d_j}(g_j\zeta_{2k,v})\mathbf{q}
		&=\mathscr{L}_{\zeta_{2k,v}}(g_j(Y)q(Y)^2)=g_j(v)q(v)^2\ge 0,\ \forall
		q(Y)\in\RR[Y]_{k-d_j}.
	\end{aligned}
\]
\begin{definition}\label{def::AC}
We say that ${\cal Q}(G)$
%satisfies the
is {\itshape Archimedean }  if
there exists $\psi\in {\cal Q}(G)$ such that the inequality $\psi(y)\ge 0$
defines a compact set in $\RR^n$.
\end{definition}
Note that the  Archimedean property implies that $S$ is compact
but the converse is not
necessarily true. However, for any compact set $S$ we can always
force the associated quadratic module to be Archimedean
%satisfy the condition
by adding a
redundant constraint $M-\Vert y\Vert^2_2\ge 0$ in the description of
$S$ for sufficiently large $M$.

\begin{theorem}\label{th::PP}{\upshape\cite[{Putinar's Positivstellensatz\/}]{Putinar1993}}
Suppose that $\mathcal{Q}(G)$ is Archimedean.
\begin{enumerate}[\upshape (i)]
	\item	If a polynomial $f\in\RR[Y]$ is
		positive on $S$, then $f\in\Qk(G)$ for some $k\in\N$;
	\item If $M_k(z)\succeq 0$ and $M_k(g_jz)\succeq 0$ for all
		$j=1,\ldots,s$, and all $k=0,1,\ldots$, then 
		$z=(z_{\alpha})_{\alpha \in \mathbb{N}^n}\in\RR^{\N^n}$
		has a representing measure $\mu$ supported by $S$.
\end{enumerate}
\end{theorem}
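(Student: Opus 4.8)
The plan is to deduce both parts from a single functional-analytic representation result, which I will call the \emph{core statement}: every linear functional $L$ on $\RR[Y]$ with $L(1)=1$ and $L(q)\ge 0$ for all $q\in\mathcal{Q}(G)$ admits a representing probability measure $\mu$ supported on $S$. The first step is to extract the operational content of the Archimedean hypothesis. Starting from $\psi\in\mathcal{Q}(G)$ with $\{\psi\ge 0\}$ compact, a standard argument produces $N\in\N$ with $N-\sum_{i=1}^n Y_i^2\in\mathcal{Q}(G)$, and from this that for each coordinate there is $N_i\in\N$ with $N_i-Y_i^2\in\mathcal{Q}(G)$, and more generally that every $f\in\RR[Y]$ satisfies $N_f\pm f\in\mathcal{Q}(G)$ for some $N_f\in\N$. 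This uniform bound is what drives the whole argument.

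To prove the core statement I would use the GNS construction. The bilinear form $\langle f,h\rangle:=L(fh)$ is positive semidefinite, so quotienting $\RR[Y]$ by its kernel and completing yields a Hilbert space $H$ on which each multiplication operator $M_i\colon f\mapsto Y_if$ is symmetric. The Archimedean bound $N_i-Y_i^2\in\mathcal{Q}(G)$ gives $L(Y_i^2h^2)\le N_iL(h^2)$, hence $\Vert M_i\Vert\le N_i^{1/2}$, so the $M_i$ are bounded, self-adjoint and pairwise commuting. The spectral theorem then supplies a joint spectral measure $E$ on $\RR^n$, and $\mu(B):=\langle E(B)\mathbf{1},\mathbf{1}\rangle$ is a probability measure representing $L$. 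That $\mathrm{supp}\,\mu\subseteq S$ follows because $L(g_jh^2)\ge 0$ for all $h$ makes the bounded operator $g_j(M_1,\dots,M_n)$ positive semidefinite, which confines the joint spectrum, and thus $E$, to $\{g_j\ge 0\}$ for every $j$; intersecting over $j$ gives support in $S$.

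Given the core statement, part (ii) is immediate: the hypotheses $M_k(z)\succeq 0$ and $M_k(g_jz)\succeq 0$ for all $k$ say, via the identity $(\ref{eq::M})$ applied with $f=g_j$, that $\mathscr{L}_z(g_jq^2)\ge 0$ for all $q$ and all $j$, i.e. that $\mathscr{L}_z\ge 0$ on $\mathcal{Q}(G)$; after dividing by $z_0$ (the degenerate case $z_0=0$ forces $z=0$ and is handled separately) the core statement produces the representing measure supported on $S$. For part (i) I would argue by separation. Suppose $p>0$ on $S$ but $p\notin\mathcal{Q}(G)$. The Archimedean bound makes $1$ an order unit for the cone $\mathcal{Q}(G)$, so a Hahn--Banach separation in the standard framework for Archimedean quadratic modules yields a linear functional $L$ with $L\ge 0$ on $\mathcal{Q}(G)$, $L(1)=1$, and $L(p)\le 0$. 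The core statement gives a probability measure $\mu$ on $S$ with $L(p)=\int p\,\mathrm{d}\mu>0$ since $p>0$ on $S$, a contradiction. Hence $p\in\mathcal{Q}(G)$, and since this is a single finite representation of bounded degree, $p\in\Qk(G)$ for some $k\in\N$.

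The main obstacle is the support claim $\mathrm{supp}\,\mu\subseteq S$. Producing \emph{some} representing measure for a positive functional requires only a Haviland-type theorem together with the boundedness coming from the Archimedean bound; but pinning the support to the basic semialgebraic set $S$ cut out by the $g_j$ is exactly the point where the Archimedean hypothesis, as opposed to mere compactness of $S$, is indispensable, and it is what distinguishes this result from its preordering counterpart. A secondary technical point is the legitimacy of the separation in part (i): the cone $\mathcal{Q}(G)$ is in general not closed in any norm topology, so the order-unit structure furnished by the Archimedean bound must be invoked to guarantee that $p$ can be strictly separated from $\mathcal{Q}(G)$ before Hahn--Banach applies.
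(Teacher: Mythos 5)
The paper itself contains no proof of Theorem \ref{th::PP}: it is quoted as a known background result directly from \cite{Putinar1993}, so there is nothing internal to compare your argument against. Judged on its own, your proof is essentially the classical functional-analytic proof of Putinar's theorem (close to Putinar's original argument): GNS construction from a $\mathcal{Q}(G)$-positive functional, boundedness of the commuting multiplication operators from the Archimedean bound, a joint spectral measure yielding both the representing measure and, via positivity of the operators $g_j(M_1,\dots,M_n)$, the confinement of the support to $S$; then part (ii) is a normalization of this core statement, and part (i) follows by separation with $1$ as order unit. These steps are sound, including the two technical points you flag (non-closedness of $\mathcal{Q}(G)$, and the degenerate case $z_{\bf 0}=0$ forcing $z=0$ by Cauchy--Schwarz), and the final degree-truncation remark correctly yields $p\in\Qk(G)$ for some finite $k$. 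The one place where you underplay the difficulty is the very first step: passing from the paper's Definition \ref{def::AC} (existence of $\psi\in\mathcal{Q}(G)$ whose level set $\{\psi\ge 0\}$ is compact) to the bound $N-\sum_{i=1}^n Y_i^2\in\mathcal{Q}(G)$ is not a routine manipulation. The known route is to apply Schm\"udgen's Positivstellensatz \cite{Schmugen1991} to the compact set $\{\psi\ge 0\}$, which gives $N-\sum_{i=1}^n Y_i^2\in\Sigma^2+\psi\,\Sigma^2\subseteq\mathcal{Q}(G)$ for $N$ large. Citing that result is legitimate (the paper itself recalls it), but your phrase ``a standard argument'' conceals the only genuinely heavy external input in the entire proof; without it, the Archimedean hypothesis as the paper states it cannot be converted into the operator-norm bounds your GNS argument needs.
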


\subsection{Dual problems and GPM}\label{sec::dual}

The Lagrangian dual problem
\cite{Hettich1993,Still2007,ShapiroSIP} of $(\ref{eq::lsipp})$ is 
\begin{equation}
\label{eq::Mdual}
\left\{
\begin{aligned}
	d^*:=\sup_{\mu\in M^+(S)}&\ -\int_{S} b(y) \ud\mu(y)\\
	\text{s.t.}&\ \ \int_{S} a_i(y) \ud\mu(y)=c_i, \ i=1,\ldots,m,
\end{aligned}\right.
\end{equation}
where $M^+(S)$ is the space of all nonnegative bounded regular Borel
measure supported by $S$. The dual problem \eqref{eq::Mdual} is in
fact a special case
of the so-called generalized problems of moments (GPM), which is to
maximize a linear function over a linear section of the moment cone.
We refer the interested readers to \cite{CMP1965,AK1962,MM} and the
references therein for
various methodologies and applications of GPM problems. For numerical
treatment of GPM problems, see \cite{MPTAT,K1980} for some geometric
approaches and \cite{Lasserre2008,Nie201501} for SDP relaxation
methods for GPM problems with polynomial data.

Now we introduce the main idea of the SDP relaxation method for
\eqref{eq::Mdual} proposed by Lasserre in \cite{Lasserre2008}. 
Assume that $S$ is compact,
%, as shown in \cite{MNNM}, the dual problems
%$(\ref{eq::Hdual})$ and $(\ref{eq::Mdual})$ have the same optimal value.
%For any $\alpha\in\N^n$ and $\mu\in M^+(S)$, denote
%\begin{equation}\label{eq::ym}
%	y_{\alpha}=\int_S X^\alpha\ud\mu(x).
%\end{equation}
%Then $(\ref{eq::dlsipp})$ can be linearized as
%\begin{equation}
%	\left\{
%	\begin{aligned}
%		\sup_{\mu\in M^+(S)}&\ -\sum_{\alpha\in\supp{b}} b_\alpha y_\alpha\\
%		\text{s.t.}&\ \sum_{\alpha\in\supp{a_i}} a_{i,\alpha} y_\alpha=c_i,\ \
%		i=1,\ldots,m.
%	\end{aligned}\right.
%	\label{eq::mdlsipp}
%\end{equation}
by Putinar's Positivstellensatz (part $(ii)$ of Theorem \ref{th::PP}),
a sequence $z=(z_{\alpha})_{\alpha \in \mathbb{N}^n}\in\RR^{\N^n}$ has
a representing measure $\mu$ supported by $S$ if
\[
M_k(z)\succeq 0,\quad M_k(g_jz)\succeq 0,\ j=1,\ldots,s,\ k=0,1,
\ldots.
\]
Define
\begin{equation}\label{eq::degree}
		\begin{aligned}
			&d_j:=\lceil\deg(g_j)/2\rceil, \ d_S:=\max\{1, d_1,\ldots,d_s\}, \\
			&d_P:=\max\{d_S,\lceil\deg(a_1)/2\rceil,\cdots,\lceil\deg(a_m)/2\rceil,
			\lceil\deg(b)/2\rceil\}.
		%\mid i=1,\ldots,m,\ j=1,\ldots,s\}.
		\end{aligned}
	\end{equation}
%Recall the notation in $(\ref{eq::degree})$.
	Let $k\ge d_P$ and
	$z=(z_\alpha)_{\alpha\in\N^n_{2k}}\in\RR^{s(2k)}$, 
%	be a truncated moment sequence up to order $2k$. 
the $k$-th semidefinite relaxation of $(\ref{eq::Mdual})$ is 
\begin{equation}
	%(DSDP)\quad
	\left\{
	\begin{aligned}
		p^{\mom}_k:=\sup_{z\in\RR^{s(2k)}}
		&\ -\sum_{\alpha\in\N^n_{2k}} b_\alpha z_\alpha\\
		\text{s.t.}&\ \sum_{\alpha\in\N^n_{2k}} a_{i,\alpha} z_\alpha=c_i,\ \
		i=1,\ldots,m,\\
		&\ M_k(z)\succeq 0,\ M_{k-d_j}(g_jz)\succeq 0,\ j=1,\ldots,s.
	\end{aligned}\right.
	\label{eq::dsdplsipp}
\end{equation}
Under certain assumptions, Lasserre proved \cite{Lasserre2008} that
$p^{\mom}_k$ decreasingly converges to $p^*$. 
%The moment hierarchy $(\ref{eq::dsdplsipp})$ is
%tighter than $(\ref{eq::psdplsipp})$ by the `weak duality'
%Note that
The SDP relaxations $(\ref{eq::dsdplsipp})$ can be easily implemented
and solved by the software GloptiPoly \cite{gloptipoly}
developed by Henrion, Lasserre and L{\"o}fberg.
%We now propose
%Moreover, if the finite convergence of $(\ref{eq::dsdplsipp})$ occurs, 
%a stopping criterion is available. Consider
\begin{condition}\label{con::extension}
	An optimizer $z^*$ of  the $k$-th SDP relaxation
	$(\ref{eq::dsdplsipp})$ satisfies the {\itshape flat extension condition} when
	\[
		\rank M_{k-d_S}(z^*)=\rank M_k(z^*).
	\]
\end{condition}
Based on \cite[Theorem 1.1]{CFKmoment}, Lasserre
	\cite[Theorem~2]{Lasserre2008} showed that the finite
	convergence of \eqref{eq::dsdplsipp}
	happens at order $k$ if the flat extension condition holds.

%The Lagrangian dual of (\ref{eq::lsipp}) is (cf. \cite{ShapiroSIP})
%\begin{equation}\label{eq::Hdual}
%(D)\qquad	
%\left\{
%\begin{aligned}
%	d^*:=\sup&\ \ -\sum_{y\in S} \lambda_yb(y)\\
%		\text{s.t.}&\ \ \sum_{y\in S}\lambda_ya(y)=c,\\
%		&\ \lambda_y\ge 0,\ \forall y\in S,
%\end{aligned}\right.
%\end{equation}
%where only finitely many dual variables $\lambda_y$,
%$y\in S$, take positive values.
%The problem $(D)$ is known as the {\itshape Haar dual} problem \cite{HaarDual}
%of $(\ref{eq::lsipp})$.
%\begin{prop}\cite{CCK1965}\label{prop::haar}
%If $S$ is compact and the Slater condition holds for $(\ref{eq::lsipp})$,
%then $p^*=d^*$ and $d^*$ is attainable.
%\end{prop}
\section{SDP relaxations of LSIPP}\label{section::SDPrelax}
In this section, we present a hierarchy of SDP relaxations for LSIPP
problems. These SDP relaxations can be seen as the dual of Lasserre's
relaxations for GPM and enjoy several desirable features. For example,
(approximate) mininizers can be extracted and the convergence rate
can be estimated by using some existing results. We shall also see in
Section~\ref{sec::extension} that these
SDP relaxations can be easily extended to more general
semi-infinite programming problems.
%depending on whether the index set
%$S$ is compact or not,
%we shall construct two hierarchies of SDP relaxations and
%provide a sufficient
%stopping criterion when the finite convergence occurs for solving
%the LSIPP problem $(\ref{eq::lsipp})$.
%\subsection{Compact case}
%In this section, we
%We assume that $S$ in $(\ref{eq::lsipp})$ is compact.
\subsection{SDP relaxations of LSIPP problems}\label{subsec::sdp}
We assume that $S$ in $(\ref{eq::lsipp})$ is compact.
For a given feasible point $x\in\RR^m$ of the LSIPP problem
$(\ref{eq::lsipp})$,
the constraint requires that the polynomial
$a(Y)^Tx+b(Y)\in\RR[Y]$ is nonnegative on
$S$. Since every polynomial in the quadratic module $\Qm(G)$ of $S$
%with respect to the generator set
generated by $G$ is nonnegative on $S$, we can
relax the problem $(\ref{eq::lsipp})$ as follows
\begin{equation}\label{eq::qmrelax}
	p^{\sos}:=\inf_{x\in\RR^m}\ \ c^Tx\quad\text{s.t.}\
	a(Y)^Tx+b(Y)\in\Qm(G).
\end{equation}
%Note that in the above constraint, $u$ is a point in $\RR^m$ and
%$X$ is
Clearly, any feasible point of $(\ref{eq::qmrelax})$ is also feasible
for $(\ref{eq::lsipp})$. Hence, we have $p^{\sos}\ge p^*$.

\begin{define}\label{def::slater}
	We say that the Slater condition holds for the problem $(\ref{eq::lsipp})$	
	if there exists $\bar{x}\in\RR^m$ such that $a(y)^T\bar{x}+b(y)>0$
	for all $y\in S$.
\end{define}

\begin{theorem}\label{th::equi}
	If $\Qm(G)$ is Archimedean and
	the Slater condition holds for the LSIPP problem
	$(\ref{eq::lsipp})$, then $p^{\sos}=p^*$.
\end{theorem}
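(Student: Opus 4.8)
The plan is to establish the two inequalities $p^{\sos}\ge p^*$ and $p^{\sos}\le p^*$ separately. The first is already recorded above: every feasible point of $(\ref{eq::qmrelax})$ produces a polynomial $a(Y)^Tx+b(Y)$ lying in $\Qm(G)$, which is therefore nonnegative on $S$ and hence feasible for $(\ref{eq::lsipp})$, so minimizing the same linear objective over the smaller feasible set can only increase the value. Thus the entire content of the theorem is the reverse inequality $p^{\sos}\le p^*$, and this is where the Slater condition and the Archimedean property enter.

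First I would invoke the Slater condition (Definition \ref{def::slater}) to fix a point $\bar x\in\RR^m$ with $a(y)^T\bar x+b(y)>0$ for all $y\in S$. Because $\Qm(G)$ is Archimedean, $S$ is compact, so the continuous function $y\mapsto a(y)^T\bar x+b(y)$ attains a strictly positive minimum $\delta>0$ on $S$. This uniform lower bound is the crucial gain from compactness: a general feasible point of $(\ref{eq::lsipp})$ only yields a polynomial that is nonnegative, not strictly positive, on $S$, so Putinar's Positivstellensatz cannot be applied to it directly.

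The main step is a perturbation argument. Let $x$ be any feasible point of $(\ref{eq::lsipp})$, and for $\epsilon\in(0,1)$ set $x_\epsilon:=(1-\epsilon)x+\epsilon\bar x$. A direct expansion gives
\[
a(y)^Tx_\epsilon+b(y)=(1-\epsilon)\bigl(a(y)^Tx+b(y)\bigr)+\epsilon\bigl(a(y)^T\bar x+b(y)\bigr),
\]
and since the first bracket is $\ge 0$ on $S$ while the second is $\ge\delta$ on $S$, we obtain $a(y)^Tx_\epsilon+b(y)\ge\epsilon\delta>0$ for every $y\in S$. Hence $a(Y)^Tx_\epsilon+b(Y)$ is a polynomial strictly positive on $S$, and Theorem \ref{th::PP}(i) yields $a(Y)^Tx_\epsilon+b(Y)\in\Qm(G)$. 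Therefore $x_\epsilon$ is feasible for $(\ref{eq::qmrelax})$, which gives
\[
p^{\sos}\le c^Tx_\epsilon=(1-\epsilon)\,c^Tx+\epsilon\,c^T\bar x.
\]

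Finally I would let $\epsilon\to 0^+$ to conclude $p^{\sos}\le c^Tx$ for every feasible $x$ of $(\ref{eq::lsipp})$, and then take the infimum over all such $x$ to obtain $p^{\sos}\le p^*$; combined with $p^{\sos}\ge p^*$ this proves equality. I expect the only delicate point to be the justification that the perturbed constraint polynomial is \emph{strictly} positive on $S$ rather than merely nonnegative, since that is precisely what licenses the use of Putinar's Positivstellensatz. The Archimedean hypothesis plays a double role here: it guarantees the compactness of $S$ that converts the pointwise strict positivity at the Slater point into the uniform bound $\delta$, and it is exactly the standing assumption under which Theorem \ref{th::PP}(i) is valid.
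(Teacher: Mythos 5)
Your proof is correct and follows essentially the same route as the paper's: form a convex combination of a feasible point with the Slater point, observe that the resulting constraint polynomial is strictly positive on $S$, apply Putinar's Positivstellensatz (Theorem \ref{th::PP}(i)) to place it in $\Qm(G)$, and pass to a limit. The only difference is bookkeeping --- the paper mixes the Slater point with a carefully chosen near-optimal point $x'$ and a single explicit $\delta$, whereas you mix it with an arbitrary feasible $x$ and let $\epsilon\to 0^+$; this is, if anything, a cleaner organization of the same argument, since it avoids the paper's auxiliary claim that $c^T\bar{x}-p^*>0$.
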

\begin{proof}
	Fix an $\varepsilon>0$ and a feasible $\bar{x}\in\RR^m$ of (\ref{eq::lsipp})
	such that $a(y)^T\bar{x}+b(y)>0$ for all $y\in S$.
	We next show that $p^{\sos}-p^*<\varepsilon$.
	By Putinar's Positivstellensatz,
	$\bar{x}$ is a feasible point of (\ref{eq::qmrelax})
	and thus we can assume that $c\neq 0$ without loss of generality.
	If $c^T\bar{x}-p^*<\varepsilon$,
	then $p^{\sos}-p^*\le c^T\bar{x}-p^*<\varepsilon$ and we are done.
	Hence, we assume that $c^T\bar{x}-p^*\ge\varepsilon$ in the following.
	%Since $S$ is compact and (\ref{eq::lsipp}) is linear in
	%the variables $X$, $\bar{x}$ lies in the interior of the feasible set of
	%(\ref{eq::lsipp}) and we have $c^T\bar{x}-p^*>0$.
	Then we can fix another feasible point $x'\in\RR^m$ of (\ref{eq::lsipp})
	such that $c^T\bar{x}>c^Tx'$ and $c^Tx'-p^*<\varepsilon/2$.
	Let
	\begin{equation}\label{eq::delta}
		\delta:=\frac{\varepsilon}{2c^T(\bar{x}-x')}>0\quad
		\text{and}\quad \hat{x}:=(1-\delta)x'+\delta\bar{x}.
	\end{equation}
	Then we have $0<\delta<1$ and hence
	\begin{equation}\label{eq::hat}
		a(y)^T\hat{x}+b(y)=(1-\delta)[a(y)^Tx'+b(y)]+\delta
		[a(y)^T\bar{x}+b(y)]>0,\ \ \forall y\in S.
	\end{equation}
	Since $\Qm(G)$ is Archimedean, $a(Y)^T\hat{x}+b(Y)\in\Qm(G)$
	by Putinar's Positivstellensatz.
	That is, $\hat{x}$ is feasible for both $(\ref{eq::lsipp})$ and
	$(\ref{eq::qmrelax})$.
	We have
	\begin{equation}\label{eq::inequality}
		\begin{aligned}
			p^{\sos}-p^*&\le c^T\hat{x}-p^*\\
			&=(1-\delta)c^Tx'+\delta c^T\bar{x}-p^*\\
			&=(c^Tx'-p^*)+\delta c^T(\bar{x}-x')\\
			&<\frac{\varepsilon}{2}+\frac{\varepsilon}{2}
			=\varepsilon,
		\end{aligned}
	\end{equation}
	which means that $p^{\sos}\le p^*$ since $\varepsilon>0$ is arbitrary.
	As $p^{\sos}\ge p^*$, we can conclude that $p^{\sos}=p^*$.
%
%
%	Fix an $\varepsilon>0$ and a feasible $\bar{x}\in\RR^m$ of $(\ref{eq::lsipp})$
%	such that $a(y)^T\bar{x}+b(y)>0$ for all $y\in S$.
%	Since $S$ is compact and $(\ref{eq::lsipp})$ is linear,
%	we have $c^T\bar{x}-p^*>0$.
%	Hence, we can fix another feasible point $x'\in\RR^m$ of $(\ref{eq::lsipp})$
%	such that $c^T\bar{x}>c^Tx'$ and $c^Tx'-p^*<\varepsilon/2$.
%	Let
%	\[
%		\delta:=\frac{\varepsilon}{2c^T(\bar{x}-x')}>0\quad
%		\text{and}\quad \hat{x}:=(1-\delta)x'+\delta\bar{x}.
%	\]
%	Then
%	\[
%		a(y)^T\hat{x}+b(y)=(1-\delta)[a(y)^Tx'+b(y)]+\delta
%		[a(y)^T\bar{x}+b(y)]>0,\ \ \forall y\in S.
%	\]
%	Since $\Qm(G)$ is Archimedean, $a(Y)^T\hat{x}+b(Y)\in\Qm(G)$
%	by Putinar's Positivstellensatz.
%	That is, $\hat{x}$ is feasible for both $(\ref{eq::lsipp})$ and
%	$(\ref{eq::qmrelax})$.
%	We have
%	\[
%		\begin{aligned}
%			p^{\sos}-p^*&\le c^T\hat{x}-p^*\\
%			&=(1-\delta)c^Tx'+\delta c^T\bar{x}-p^*\\
%			&=(c^Tx'-p^*)+\delta c^T(\bar{x}-x')\\
%			&<\frac{\varepsilon}{2}+\frac{\varepsilon}{2}
%			=\varepsilon.
%		\end{aligned}
%	\]
%	Since $\varepsilon>0$ is arbitrary and $p^{\sos}\ge p^*$,
%	we conclude that $p^{\sos}=p^*$.
\end{proof}

Note that we do not require that $p^*$ is attainable in the above proof.
%It is easy to verify that $(\ref{eq::dsdp})$ is equivalent to
For $k\ge d_P$,
replacing $\Qm(G)$ in $(\ref{eq::qmrelax})$ by its $k$-th truncation
$\Qm_k(G)$, we obtain
\begin{equation}\label{eq::psdplsipp}
	\left\{
	\begin{aligned}
		p^{\sos}_{k}:=\inf_{x\in\RR^m}&\ \ c^Tx\\
		\text{s.t.}&\ a(Y)^Tx+b(Y)=
		\sum_{j=0}^s\sigma_j(Y)g_j(Y),\\
		&\ \ g_0=1, \sigma_j\in\Sigma^2[Y], \deg(\sigma_jg_j)\le 2k,\
		j=0,\ldots,s.
	\end{aligned}\right.
\end{equation}
Now we reformulate $(\ref{eq::psdplsipp})$
as an SDP problem.
For any $t\in\N$,
let $m_{t}(Y)$ be the column vector consisting of all the monomials
in $Y$ of degree up to $t$. Recall that $s(t)={n+t\choose n}$ which
is the
%cardinality
dimension of $m_t(Y)$.  For each $j=0,1,\ldots,s$, there
exists a positive semidefinite matrix
$Z_j\in\RR^{s(k-d_j)\times s(k-d_j)}$
such that
\[
	\sigma_j(Y)=m_{k-d_j}(Y)^T\cdot Z_j\cdot m_{k-d_j}(Y).
\]
For each $\alpha\in\N^n_{2k}$, we can find a symmetric matrix
$C_{j,\alpha}\in\RR^{s(k-d_j)\times s(k-d_j)}$ such that the coefficient
of $\sigma_jg_j$ equals $\langle Z_j, C_{j,\alpha}\rangle$ for each
$j=0,1,\ldots,s$.
Let
\[
	b(Y)=\sum_{\alpha\in\N^n_{2k}} b_{\alpha}Y^\alpha\quad\text{and}\quad
	a_i(Y)=\sum_{\alpha\in\N^n_{2k}} a_{i,\alpha}Y^\alpha,\  i=1,\ldots,m.
\]
Then $(\ref{eq::psdplsipp})$ can be written as the following SDP problem
%\begin{equation}\label{eq::dsdp}
\begin{equation}\label{eq::SDPform}
	\left\{
	\begin{aligned}
		p^{\sos}_k=\inf_{Z_j\succeq 0, x\in\RR^m}&\ \ c^Tx\\
		\text{s.t.}&\ \ \sum_{i=1}^m x_ia_{i,\alpha}+
		b_{\alpha}
		=\sum_{j=0}^s\langle Z_j, C_{j,\alpha}\rangle,
		\ \forall \alpha\in\N^n_{2k}.
	\end{aligned}\right.
\end{equation}
It follows that
\begin{theorem}\label{th::convergence}
	If $\Qm(G)$ is Archimedean and
	the Slater condition holds for the LSIPP problem
	$(\ref{eq::lsipp})$, then
	$p^{\sos}_k$ decreasingly converges to $p^*$
	as $k\rightarrow\infty$.
\end{theorem}
\begin{proof}
	For any $\varepsilon>0$, let $\hat{x}$ be defined as 
	%as shown
	in the proof of Theorem \ref{th::equi}.
	%there exists a feasible point $\hat{x}$ of (\ref{eq::lsipp}) such that
	%$a(Y)^T\hat{x}+b(Y)\in\Qm(G)$ and $c^T\hat{x}-p^*<\varepsilon$.
%	For some $k\in\N$,
	We have $a(Y)^T\hat{x}+b(Y)\in\Qm_k(G)$ for some $k\in\N$ and then
	$p^{\sos}_k-p^*\le c^T\hat{x}-p^*<\varepsilon$. Since $\varepsilon$ is
	arbitrary, $p^{\sos}_k$ decreasingly converges to $p^*$
	as $k\rightarrow\infty$.
\end{proof}

%Next we derive 
The Lagrangian dual problem of $(\ref{eq::psdplsipp})$ is eactly the
SDP relaxation \eqref{eq::dsdplsipp} derived by Lasserre in
\cite{Lasserre2008}.
%\end{equation}
%For any $k\in\N$ and any sequence of real numbers
%Let $z=(z_\alpha)_{\alpha\in\N^n_{2k}}\in\RR^{s(2k)}$ be a truncated
%moment sequence up to order $2k$. Then it is easy to check that
%the moment matrix $M_k(z)$ and
%the localizing moment matrix $M_{k-d_j}(g_jz)$ can be represented as
%\[
%	M_k(z)=\sum_{\alpha\in\N^n_{2k}}z_{\alpha}C_{0, \alpha}\quad
%	\text{and}\quad M_{k-d_j}(g_jz)
%	=\sum_{\alpha\in\N^n_{2k}}z_{\alpha}C_{j,\alpha},
%        \ j=1,\ldots,s.
%\]
%Then, the Lagrangian dual of the SDP problem $(\ref{eq::psdplsipp})$ is
%the moment problem
%%can be formulated as
%%Hence, we obtain the $k$-th semidefinite relaxation of $(\ref{eq::dlsipp})$
%\begin{equation}
%	%(DSDP)\quad
%	\left\{
%	\begin{aligned}
%		p^{\mom}_k:=\sup_{z\in\RR^{s(2k)}}
%		&\ -\sum_{\alpha\in\N^n_{2k}} b_\alpha z_\alpha\\
%		\text{s.t.}&\ \sum_{\alpha\in\N^n_{2k}} a_{i,\alpha} z_\alpha=c_i,\ \
%		i=1,\ldots,m,\\
%		&\ M_k(z)\succeq 0,\ M_{k-d_j}(g_jz)\succeq 0,\ j=1,\ldots,s.
%	\end{aligned}\right.
%%	\label{eq::dsdplsipp}
%\end{equation}
By the `weak duality', we have $p^{\mom}_k\le p^{\sos}_k$.
Consequently, we can reprove the convergence of \eqref{eq::dsdplsipp}. 
%Moreover,
\begin{theorem}\label{th::moncon}
	If $\Qm(G)$ is Archimedean and
	the Slater condition holds for the LSIPP problem
	$(\ref{eq::lsipp})$, then
	$p^{\mom}_k$ decreasingly converges to $p^*$
	as $k\rightarrow\infty$.
	%$\underset{k\rightarrow\infty}{\lim}{p^{\mom}_k}
	%\downarrow p^*$.
	%Moreover, for any $k\in\N$, $p^{\mom}_k=p^{\sos}_k$
	%if $S$ has nonempty interior and $(\ref{eq::psdplsipp})$
	%is feasible.
\end{theorem}
\begin{proof}
%	By the `weak duality' and Theorem
%	\ref{th::convergence}, 
%It suffices to prove that
%	$p^{\mom}_k\ge p^*$ for each $k\ge d_P$.
%	Consider the Haar dual problem $(\ref{eq::Hdual})$ of $(\ref{eq::lsipp})$.
%	%\begin{equation}%\label{eq::Hdual}
%	%\left\{
%	%\begin{aligned}
%	%	d^*:=\sup&\ \ -\sum_{x\in S} \lambda_xb(x)\\
%	%	\text{s.t.}&\ \ \sum_{x\in S}\lambda_xa(x)=c,\\
%	%	&\ \lambda_x\ge 0,\ \forall x\in S,
%	%\end{aligned}\right.
%        %\end{equation}
%%where we allow only a finite number of the dual variable $\lambda_x$,
%%$x\in S$, to take positive values.
Since $S$ is compact and the Slater condition holds for $(\ref{eq::lsipp})$,
$p^*=d^*$ and $d^*$ is attainable (c.f. \cite{CCK1965}). 
It is clear that $p^{\mom}_k\ge d^*=p^*$ for each $k\ge d_P$. Then the
conclusion follows from Theorem~\ref{th::convergence} and the `weak
duality'. 
%Denote by $(\lambda_y^*)_{y\in S}$ an optimizer of $d^*$ and
%$S^*$ as the finite subset of $S$
%such that $\lambda^*_y>0$ for every $y\in S^*$.
%Let $\bar{z}=\sum_{y\in S^*}\lambda_y\zeta_{2k,y}$ where $\zeta_{2k,y}$
%is the Zeta vector of $y$ up to degree $2k$.
%%where
%%\[
%%	[x]_{2k}=[1\quad x_1\quad \cdots\quad x_n\quad
%%		x_1^2\quad x_1x_2\quad \cdots\quad x_n^{2k}].
%%\]
%Clearly, $\bar{z}$ is feasible for $(\ref{eq::dsdplsipp})$ and
%then $p^{\mom}_k\ge -\sum_{\alpha}b_{\alpha}\bar{z}_{\alpha}=d^*=p^*$.
%Hence, we obtain that
%$p^{\mom}_k\downarrow p^*$ as $k\rightarrow\infty$.
%%
%%Suppose that $(\ref{eq::psdplsipp})$ is feasible.
%%%If $\bar{x}$ is an interior point of $S$, then it is easy to check that
%%Let $\mu$ be a probability measure with uniform distribution on $S$ and
%%$y^{(\mu)}$ be the moment sequence with representing measure $\mu$ up to
%%order $2k$.
%%%with
%%%define the vector
%%%$y^{(\mu)}:=(y^{(\mu)}_{\alpha})_{\alpha\in\N^n_{2k}}$ with
%%%\[
%%%	y^{(\mu)}_{\alpha}:=\int_SX^{\alpha}\ud\mu.
%%%\]
%%Then, $y^{(\mu)}$ is a strictly feasible point of $(\ref{eq::dsdplsipp})$.
%%Hence, $p^{\mom}_k=p^{\sos}_k$ follows from the duality theory of
%%convex programming.
\end{proof}

For any feasible point $x\in\RR^m$ of $(\ref{eq::lsipp})$, the
{\itshape active index set} of $x$ is
\[
	\{y\in S\mid a(y)^Tx+b(y)=0\}.
\]
Consider the flat extension condition (Condition~\ref{con::extension}). If it
happens, then $p^{\mom}_k=p^*$ and by \cite[Theorem
1.1]{CFKmoment}, $z^*$ has a unique $r$-atomic measure
	supported by $S$, i.e., there exist $r$ positive real numbers
	$\lambda_1,\ldots,\lambda_r$ and
	$r$ distinct points $v_1,\ldots,v_r\in S$ such that
	\begin{equation}\label{eq::v}
		z^*=\lambda_1\zeta_{2k,v_1}+\cdots+\lambda_r\zeta_{2k,v_r},
	\end{equation}
	where $\zeta_{2k,v_i}$ is the Zeta vector of $v_i$ up to degree $2k$.
%	Suppose that $\Qm(G)$ is Archimedean and
%	the Slater condition holds for the LSIPP problem
%	$(\ref{eq::lsipp})$.
%	If the flat extension condition holds
%	for an optimizer $z^*$ of the $k$-th SDP relaxation
%	$(\ref{eq::dsdplsipp})$, then $p^{\mom}_k=p^*$ by
%	Theorem 2 in \cite{Lasserre2008}. Additionally,
\begin{prop}\label{pro::certificate}
	Suppose that $\Qm(G)$ is Archimedean and
	the Slater condition holds for the LSIPP problem
	$(\ref{eq::lsipp})$.
	%Suppose that $p^*=d^*$.
	%Fix a $r\in\N$, if
	%If the flat extension condition holds
	%for an optimizer $z^*$ of the $k$-th SDP relaxation
	%$(\ref{eq::dsdplsipp})$, then $p^{\mom}_k=p^*$ and
	Then, $v_1,\ldots,v_r$ in $(\ref{eq::v})$
	belong to the active index set
	of each minimizer $x^*$ of
	%the LSIPP problem
	$(\ref{eq::lsipp})$.
	%\label{th::cert}
\end{prop}
\begin{proof}
%	See Remark 2 and Theorem 2 in \cite{Lasserre2008}.
%	By Theorem \ref{th::moncon}, it suffices to show that $p^{\mom}_k\le p^*$.
%	Let $r=\rank M_t(z^*)$.
%	By \cite[Theorem 1.1]{CFKmoment}, $z^*$ has a unique $r$-atomic measure
%	supported on $S$, i.e., there exist $r$ positive real numbers
%	$\lambda_1,\ldots,\lambda_r$ and
%	$r$ distinct points $v_1,\ldots,v_r\in S$ such that
%	\[
%		z^*=\lambda_1\zeta_{2k,v_1}+\cdots+\lambda_r\zeta_{2k,v_r},
%	\]
%	where $\zeta_{2k,v_i}$ is the Zeta vector of $v_i$ up to degree $2k$.
%	%By definition, we have $d_r^*\ge d^*$. Next
%	By $(\ref{eq::dsdplsipp})$, we have
%	\[
%		p^{\mom}_k=-\sum_{i=1}^r \lambda_i b(v_i) \quad\text{and}\quad
%		c=\sum_{i=1}^r \lambda_i a(v_i).
%	\]
	As
	%\begin{equation}\label{eq::active}
		\[
		p^*=c^Tx^*=\sum_{i=1}^r \lambda_i a(v_i)^Tx^*\ge -\sum_{i=1}^r \lambda_i b(v_i)
		=p^{\mom}_k=p^*
	\]
	for any minimizer $x^*$ of $(\ref{eq::lsipp})$, the conclusion follows.
	%, we have
	%\end{equation}
	%The inequality is due to the feasibility of $x$. Hence, we have $p^{\mom}_k\le p^*$.
\end{proof}

%\begin{remark}\label{rk::active}
The extraction procedure of the points $v_i$'s
%in the above proof
can be found in \cite{LasserreHenrion} and has been implemented in
GloptiPoly.
%It can be inferred from %$(\ref{eq::active})$
%that the points $v_1,\ldots,v_r$ belong to the
%are in the common active index set
%of {\itshape every} minimizer $x^*$ of the LSIPP problem $(\ref{eq::lsipp})$.
\begin{remark}\label{rk::flattruncation}
	Note that the flat extension condition is only a
	sufficient condition which means that it might not hold when the
	finite convergence of \eqref{eq::dsdplsipp} happens. 
	A weaker stopping criterion called
	{\itshape flat truncation condition} was proposed by Nie in
	\cite{NieFlatTruncation} for SDP relaxations of polynomial optimziaiton problems. 
	It can also be used as a sufficient
	condition to certify the finite convergence of
	\eqref{eq::dsdplsipp}. Precisely, if an optimizer $z^*$ of  the
	$k$-th SDP relaxation $(\ref{eq::dsdplsipp})$ satisfies 
	\[
		\rank M_{t-d_S}(z^*)=\rank M_t(z^*)
	\]
	for some integer $t\in[d_P, k]$, then $p^{\mom}_k=p^*$ and the
	points $v_1,\ldots,v_r$ can also be extracted. See
	\cite{NieFlatTruncation} for details.
\end{remark}
%Indeed, we will see in Section \ref{sec::special} that when applying
%(\ref{eq::psdplsipp}) and (\ref{eq::dsdplsipp})
%to the LSIPP problems reformulated from polynomial optimization problems,
%we can get their classic Lasserre's SDP relaxations whose finite convergence
%can be certified by the flat extension condition.
%%Therefore,
%In fact, \cite[Example 6.24]{Laurent_sumsof} shows that the flat extension condition
%is only sufficient not necessary.
%\hfill$\square$
%(ii) Suppose that the optimum $p^*$ is achieved at $x^*$ and consider the
%polynomial optimization problem
%\begin{equation}\label{eq::opp}
%	\left\{
%	\begin{aligned}
%		\inf_{y\in\RR^n}&\ a(y)^Tx^*+b(y)\\
%		\text{s.t.}&\ g_1(y)\ge 0,\ldots,g_s(y)\ge 0.
%	\end{aligned}
%	\right.
%\end{equation}
%Then, Condition \ref{con::extension} holds if the Lasserre's SDP
%relaxations of (\ref{eq::opp}) have certified finite
%convergence \cite{Laurent_sumsof}.
%For a given polynomial optimization problem, it was proved in
%\cite{NieFlatTruncation,NieFiniteCon} that if the coefficients of
%the involved polynomials are general, then the ceritified finite
%convergence of its Lasserre's SDP relaxations occurs. Hence,
%we can expect that Condition \ref{con::extension} holds when
%applying the SDP relaxation approach of (\ref{eq::psdplsipp})
%\and (\ref{eq::dsdplsipp}) in practice.
%\end{remark}

%Before giving some numerical experiments,
\vskip 5pt
Compared with existing numerical approaches for LSIP problems, the SDP
relaxations (\ref{eq::psdplsipp}) and
(\ref{eq::dsdplsipp}) are applicable for LSIPP problems with index sets
being arbitrary basic semialgebraic sets, not necessarily box-shaped. 
\begin{example}\label{ex::ex2}
Consider the following problem
\begin{equation}\label{eq::ex2}
	\left\{
	\begin{aligned}
		\inf_{x\in\RR^2}&\ \ x_2\\
		\text{s.t.}&\ \ x_1y_1+x_2-y_2\ge 0, \
		\forall y\in S,
	\end{aligned}\right.
\end{equation}
where
\[
	S:=\{y\in\RR^2\mid (y_1+5y_2)y_1^2-(y_1^2+y_2^2)^2\ge 0\}
\]
which is the gray region in Figure \ref{fig::ex2}.
Clearly, it is equivalent to the bilevel problem
\[
	\min_{x_1\in\RR}\max_{y\in S}\ y_2-x_1y_1.
\]
By replacing the lower level maximality condition by the KKT condition, it
is easy to check that
%Geometrically, the problem is to minimize the $y_2$-intercept of
%the tangent
%the line $l_x(y):=x_1y_1+x_2-y_2=0$ lying above $S$.
%tangent to $S$ above.
%below which the set $S$ lies.
%Since $S$ has nonempty interior, we have $p^{\mom}_k=p^{\sos}_k$ for
%each $k\in\N$.
%From Figure \ref{fig::ex2}, we can see that
%the optimum is reached when the line $l_x(y)$
%is simultaneously tangent at two different points of $S$.
%It can be checked that the optimal tangent line is
%\[
%	l_{x^*}(y)=\frac{1}{5}y_1+\frac{125}{104}-y_2
%\]
the minimizer is $x^*=(\frac{1}{5},\frac{125}{104})$ and its active index set
consists of
%and the two tangent points are
\begin{equation*}\label{eq::tangentpoints}
	\begin{aligned}
		&\left(\frac{625}{2704}+\frac{1875}{2704}\sqrt{3},\
		\frac{3375}{2704}+\frac{375}{2704}\sqrt{3}\right)
		\approx (1.4322, 1.4884),\\
		&\left(\frac{625}{2704}-\frac{1875}{2704}\sqrt{3},\
		\frac{3375}{2704}-\frac{375}{2704}\sqrt{3}\right)
		\approx (-0.9699, 1.0079).
	\end{aligned}
\end{equation*}
Thus, the optimum is $\frac{125}{104}\approx 1.2019$.
%We have $d_P=2$.
%Note that the current code of HDA method can only deal with SIP problems
%with box-shaped index sets. We compare the numerical results of the
%moment relaxation with ACA method implemented in SIPSOLVER
%\footnote{available at \url{http://kop.ior.kit.edu/english/downloads.php}},
%which can provide feasible iterates and upper bounds of the optimal value.
%The Matlab toolbox Intlab 9.1 \cite{Ru99a} is used in SIPSOLVER to compute
%the convexification parameters.
%As shown in Table \ref{tab::ex2}, we can obtain the certified optimal value
%by the moment relaxation of order $3$ in much less CPU time.
%\begin{table}
%\centering
%	\begin{tabular}{c c c c c c c c}
%\midrule[0.8pt]
%\multirow{2}{*}{$\quad p^*\quad$} &\multicolumn{4}{c}{Moment relaxation}
%&&\multicolumn{2}{c}{ACA}\\\cline{2-5} \cline{7-8}
%& $\quad p^{\mom}_{2} \quad $&$\quad p^{\mom}_3 \quad$
%& {\sf time} & {\sf certified}& &{\quad\sf UBD \quad}&{\sf time}\\
%\midrule[0.4pt]
%$1.2019$& $1.2982$ & $1.2019$ &0.38s&Y& & $1.3012$ & 646s\\[1pt]
%\midrule[0.8pt]\\
%\end{tabular}
%\caption{\label{tab::ex2} Comparison of computational results for the LSIPP
%problem in Example \ref{ex::ex2} of moment relaxation and ACA method.}
%\end{table}
%Compute the relaxations $(\ref{eq::dsdplsipp})$ with
Using GloptiPoly, we get $p^{\mom}_2=1.2982$ and $p^{\mom}_3=1.2019$.
%Condition \ref{con::extension} 
The flat extension condition holds at the order $k=3$. 
%By GloptiPoly, 
We can extract the active index set $\{(1.4321,1.4883), (-0.9699,1.0079)\}$.
%which corresponds to the set of tangent points (\ref{eq::tangentpoints}).
%where the optimal line is tangent to $S$,
%consists of $(1.4321,1.4883)$ and $(-0.9699,1.0079)$.
%The consumed computer time is $0.24$ seconds.
%Note that these points are where the optimal line
%$l_{x^*}(y)=0$ is tangent to $S$.
%Drawing the line passing through these two points,
%we get $l_{x^*}(y)=0$ as shown in Figure \ref{fig::ex2}.
%tangent to $S$.
\hfill$\square$
\begin{figure}
\centering
\includegraphics[width=0.35\textwidth]{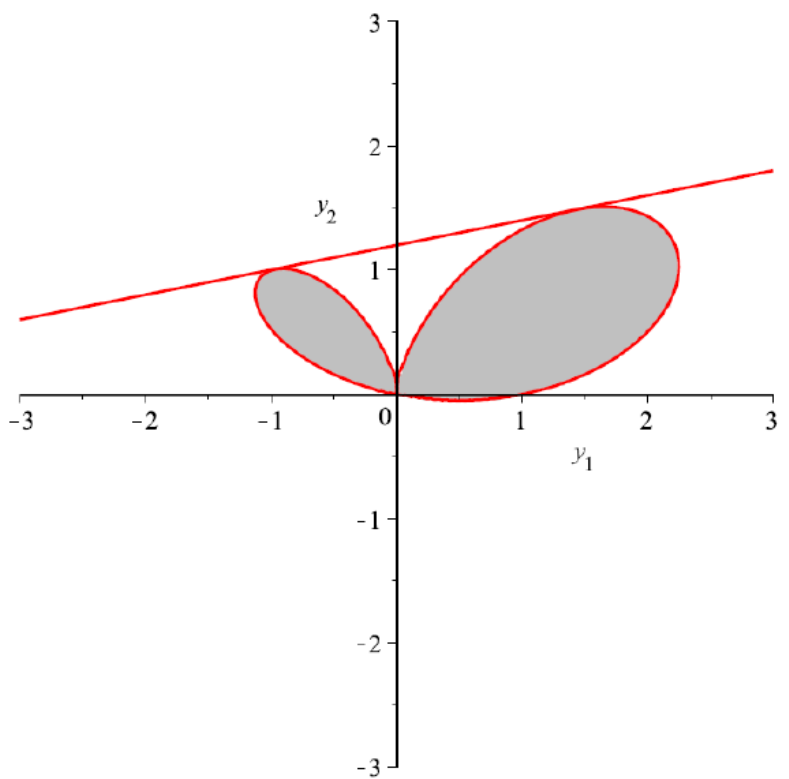}
\caption{The semialgebraic set $S$ (gray) in Example \ref{ex::ex2} and the
line $x_1^*y_1+x_2^*-y_2=0$ (red).}\label{fig::ex2}
\end{figure}
\end{example}

Although the optimal value $p^*$ of \eqref{eq::lsipp} can be
approximated by solving the dual problem \eqref{eq::Mdual} with the
SDP relaxations \eqref{eq::dsdplsipp} given in \cite{Lasserre2008}, the
hierarchy of SDP relaxations
\eqref{eq::psdplsipp} of \eqref{eq::lsipp} itself is of independent
interest.
%as we will see in
%the following.
%We say a convex set $C$ in $\RR^m$ has a {\itshape semidefinite
%representation} if there exist some integers
%$l, k$ and real $k\times k$ symmetric matrices $\{A_i\}_{i=0}^m$ and
%$\{B_j\}_{j=1}^l$ such that $C$ is identical with 
%\begin{equation}\label{eq::sdr}
%	\left\{x\in\RR^m\ \Big|\ \exists w\in\RR^l,\ \text{s.t.}\ A_0+\sum_{i=1}^m
%	A_ix_i+\sum_{j=1}^l B_jw_j\succeq 0\right\}
%\end{equation}
%Many interesting convex sets have semidefinite representation, see a
%collection in \cite{LecturesConOpt}.
%
For example, we can solve the relaxation $(\ref{eq::psdplsipp})$ and
extract the optimal solution $(x^{(k)},Z_0^{(k)},\ldots,Z_s^{(k)})$ (if it exists)
by the software YALMIP \cite{YALMIP}. As
$p^{\sos}_k$ may not be attainable,
let $(\tilde{x}^{(k)},\tilde{Z}_0^{(k)},\ldots,\tilde{Z}_s^{(k)})$
be an $\frac{\varepsilon}{2}$-optimal solution of (\ref{eq::psdplsipp}).
Since $\tilde{x}^{(k)}$ is feasible for (\ref{eq::lsipp}) and
$p^{\sos}_k-p^*<\frac{\varepsilon}{2}$ for some $k\in\N$, a subsequence
of $\{\tilde{x}^{(k)}\}_{k\in\N}$ converges to an $\varepsilon$-optimal solution
of (\ref{eq::lsipp}) if the feasible set of (\ref{eq::lsipp}) is bounded.

\begin{example}\label{ex::af}
Consider the following problem	
\begin{equation}\label{eq::CAP2}
	\left\{
	\begin{aligned}
		\min_{x_0, x_{i_1,i_2}\in\RR}&\ x_0\\
		\text{s.t.}&\ \left|\sum_{i_1=0}^t\sum_{i_2=0}^tx_{i_1,i_2}y_1^{i_1}y_2^{i_2}-b(y)\right|\le x_0,
		\quad \forall y\in [-1,1]^2,\\
	\end{aligned}\right.
\end{equation}
which is to approximate the function $b(Y)$ from the spans of
$Y_1^{i_1}Y_2^{i_2}$ in some sense.
Hence, it is more useful to give the minimizers $x^*$ which are the corresponding
optimal coefficients for the basis functions $Y_1^{i_1}Y_2^{i_2}$ in the approximations.
Here, we consider two cases \cite{Watson1975}: 
\[
	\text{(i):}\	t=2,\ b(Y)=\frac{1}{Y_1+2Y_2+4};\quad 
	\text{and\  (ii):}\ t=2,\ b(Y)=\sqrt{Y_1+2Y_2+4}.
\]
While $b(Y)$ in (ii) is a semialgebraic function,
\eqref{eq::psdplsipp} still works by adding some lifted variables, see
Section~\ref{sec::extension1}. 
Solving \eqref{eq::psdplsipp} with YALMIP, the obtained coefficients
are
%by the software YALMIP 
listed below
%Table \ref{tab::coefficients}, where for Problem \ref{pro::15} and \ref{pro::16}
%the coefficients are listed 
\[
	\begin{aligned}
		\text{(i)}:\ &(0.2341,-0.0468,-0.1507,0.1203,-0.0706,-0.1292,0.0837,0.0136,0.0927),\\
		\text{(ii)}:\
		&(2.0043,0.2494,0.5125,-0.0747,0.0194,0.0350,-0.0133,-0.0178,-0.0708),
	\end{aligned}
\]
in the order
$(x^*_{0,0},x^*_{1,0},x^*_{0,1},x^*_{1,1},x^*_{2,1},x^*_{1,2},x^*_{2,2},x^*_{2,0},x^*_{0,2})$.
\hfill$\square$
%
%
%
%\begin{problem}\label{pro::15}\cite{Watson1975} Problem (\ref{eq::CAP2}) with $t=2$ and $b(y)=\frac{1}{y_1+2y_2+4}$.
%%\[
%%	\begin{aligned}
%%		\min_{x_0\in\RR, x\in\RR^n}&\ x_0\\
%%		\text{s.t.}&\ \left|\sum_{i_2=0}^t\sum_{i_1=0}^tx_{i_1,i_2}y_1^{i_1}y_2^{i_2}-\right|\le x_0,
%%		\quad \forall y\in [-1,1],\\
%%	\end{aligned}
%%\]
%\end{problem}
%
%
%
%\begin{problem}\label{pro::16}\cite{Watson1975} Problem (\ref{eq::CAP2}) with $t=2$ and $b(y)=\sqrt{y_1+2y_2+4}$.
%\end{problem}
%
\end{example}

%\vskip 5pt

At the end of this part, 
%we would like to point out the advantages of
%the SDP relaxations \eqref{eq::psdplsipp} compared with 
let us briefly introduce the SDP
relaxation method for general SIPP problems given in \cite{SIPPSDP}
which can also be used to solve \eqref{eq::lsipp}. The approach in
\cite{SIPPSDP} is based on the following exchange scheme. At its
$k$-th iteration, we solve the linear programming problem
\begin{equation}\label{eq::lp}
	\min_{x\in\RR^m}\ \ c^Tx\quad\text{s.t.}\ a(y)^Tx+b(y)\ge 0, \
	y\in Y_k, 
\end{equation}
where $Y_k\subset S$ is a finite set and generated at
the last iteration.
Next, choose a minimizer $x^{(k)}$ of \eqref{eq::lp} and globally
solve the polynomial optimizaion problem
\begin{equation}\label{eq::posipp}
	\min_{y\in S}\ \ a(y)^Tx^{(k)}+b(y)
\end{equation}
by Lasserre's SDP relaxation method \cite{LasserreGlobal2001}. Extract
the set $\mathcal{S}_k$ of global minimizers of \eqref{eq::posipp} and
let $Y_{k+1}=Y_k\cup\mathcal{S}_k$, then go to next iteration. To
guarantee the convergence, the feasible set of \eqref{eq::lsipp} need
to be
compact and $\mathcal{S}_k\neq\emptyset$ for each $k$. However, if the flat
extension condition or flat trucation condition is not satisfied when
solving \eqref{eq::posipp} by
Lasserre's relaxation (when $\mathcal{S}_k$ is infinite, for example),
it is hard to obtain the set $\mathcal{S}_k$ without which the iteration goes
into dead loop.

\subsection{Convergence rate analysis}\label{sec::cra}
Denote by $\mathcal{F}$ and $\mathcal{F}_k$ the feasible sets of
\eqref{eq::lsipp} and \eqref{eq::psdplsipp}, respectively. 
In this subsection, we assume that $\mathcal{F}$ and $S$ are compact.
For the simplicity in the convergence rate analysis of the SDP
relaxations \eqref{eq::psdplsipp}, we consider the following
assumption which holds possibly after some rescaling.
\begin{ass}\label{ass::fs}
It holds that $\mathcal{F}\subseteq(-1,1)^m$ and $S\subseteq(-1,1)^n$
for \eqref{eq::lsipp}. 
\end{ass}

Let
\[
\omega:=\max\{\deg(a_1),\ldots,\deg(a_m),\deg(b)\}.
\]
For a polynomial $h(Y)=\sum_\alpha h_\alpha Y^\alpha\in\RR[Y]$, define
the norm
\begin{equation*}\label{eq::fnorm}
	\Vert h\Vert:=\max_\alpha\frac{\vert
	h_\alpha\vert}{\tbinom{\Vert\alpha\Vert_1}{\alpha}}. 
\end{equation*}
%where $\Vert\alpha\Vert_1$ is the 1-norm of $\alpha$. 
Recalling the proof of Theorem~\ref{th::equi}, we have 
\begin{theorem}\label{th::rate0}
	Suppose that Assumption~\ref{ass::fs} holds, $\mathcal{Q}(G)$ is
	Archimedean and
	%Fix an abitrary 
	$\bar{x}$ is a Slater point of \eqref{eq::lsipp}. Let $r_{\bar{x}}^*:=\min_{y\in
	S}a(y)^T\bar{x}+b(y)>0$,  then there
	exists $\gamma>0$ depending on $g_i$'s in \eqref{eq::S} such that
	for any $\varepsilon>0$, it holds that $0\le
	p_k^{\sos}-p^*\le\varepsilon$ whenever
	\[
		k\ge \gamma\exp\left[\left(\omega^2n^\omega\frac{\sum_{i=1}^m\Vert
		a_i\Vert+\Vert
	b\Vert}{\kappa(\varepsilon)r_{\bar{x}^*}}\right)^\gamma\right],
	\]
	where
	$\kappa(\varepsilon):=\min\{1,\frac{\varepsilon}{4\sqrt{m}\Vert
	c\Vert_2}\}$. 
\end{theorem} 
\begin{proof}
	%For an $\varepsilon>0$ and a Slater point $\bar{x}$ of
	%\eqref{eq::lsipp}. 
	If $c^T\bar{x}-p^*\le\varepsilon$, let
	$x^{(\varepsilon)}=\bar{x}$; otherwise, let
	$x^{(\varepsilon)}=\hat{x}$ as defined in \eqref{eq::delta}. As
	$\mathcal{F}\subseteq(-1,1)^m$, we have
	$\delta\ge\frac{\varepsilon}{4\sqrt{m}\Vert c\Vert_2}$ in
	\eqref{eq::delta}. In either case, it holds from \eqref{eq::hat} that
	$a(y)^Tx^{(\varepsilon)}+b(y)\ge\kappa(\varepsilon)r_{\bar{x}^*}>0$
	for any $y\in S$.
	By \cite[Theorem~6]{NieSchweighofer},  there
	exists $\gamma>0$ depending on $g_i$'s such that
	$a(Y)^Tx^{(\varepsilon)}+b(Y)\in\mathcal{Q}_{\bar{k}}(G)$ where 
	\[
		\bar{k}:=\gamma\exp\left[\left(\omega^2n^\omega\frac{\Vert
		a(Y)^Tx^{(\varepsilon)}+b(Y)\Vert}{\min_{y\in
		S}a(y)^Tx^{(\varepsilon)}+b(y)}\right)^\gamma\right]. 
	\]
Clearly, 
\[
	\bar{k}\le \gamma\exp\left[\left(\omega^2n^\omega\frac{\sum_{i=1}^m\Vert
		a_i\Vert+\Vert
	b\Vert}{\kappa(\varepsilon)r_{\bar{x}^*}}\right)^\gamma\right]\le
	k.
\]
Hence, $a(Y)^Tx^{(\varepsilon)}+b(Y)\in\mathcal{Q}_k(G)$ and
$p_k^{\sos}-p^*\le c^Tx^{(\varepsilon)}-p^*\le\varepsilon$ by
\eqref{eq::inequality}.
\end{proof}

For any $\varepsilon>0$, compared with \eqref{eq::lsipp}, consider the
problem
\begin{equation}\label{eq::lsippe}
(P_\varepsilon)\qquad	
\left\{
\begin{aligned}
	p^*_\varepsilon:=\inf_{x\in\RR^m}&\ c^Tx\\
	\text{s.t.}&\ a(y)^Tx+b(y)\ge \varepsilon,\ \ \forall y\in S. 
\end{aligned}\right.
\end{equation}
Obviously, $p^*\le p^*_\varepsilon$ for any $\varepsilon>0$. 
Moreover, by the stability of optimal values of linear
semi-infinite programming problems (c.f. \cite[Theorem
5.1.5]{GL2014}), it follows that
\begin{lemma}\label{lem::perturbation}
	If Assumption~\ref{ass::fs} and
	%the optimal set of \eqref{eq::lsipp} is a non-empty compact
	%set, $S$ is compact and 
	the Slater condition hold for \eqref{eq::lsipp}, then there
	exist scalars $\bar{\varepsilon}>0$ and $L>0$ such that
	$p^*_\varepsilon-p^*\le L\varepsilon$ for any
	$\varepsilon\le\bar{\varepsilon}$. 
\end{lemma}
\begin{proof}
	Since $\mathcal{F}$ is compact, the optimal solution set of
	\eqref{eq::lsipp} is non-empty and compact. 
	As $S$ is compact, a Slater point of \eqref{eq::lsipp} is also a
	strong Slater point. Then, the conclusion follows by the Lipschitz
	continuity of the optimal value function of \eqref{eq::lsipp}
	(see, \cite[Theorem 5.1.5]{GL2014}). 
\end{proof}

For any $\varepsilon>0$, denote the feasible set of
\eqref{eq::lsippe} by
\[
	\mathcal{F}_\varepsilon:=\{x\in\RR^m\mid
	a(y)^Tx+b(y)\ge \varepsilon,\quad \forall\ y\in S\}. 
\]
\begin{lemma}\label{lem::conclude}
	Suppose that Assumption~\ref{ass::fs} holds and $\mathcal{Q}(G)$
	is Archimedean. Then, there
	exists some $\gamma>0$ depending on $g_i$'s in \eqref{eq::S} such that
	for all integers $k>\gamma\exp((2\omega^2n^\omega)^\gamma)$,
	we have $\mathcal{F}_\varepsilon\subseteq\mathcal{F}_k$ whenever
%	$\varepsilon\ge\varepsilon_k$ where 
	\begin{equation}\label{eq::ek}
		\varepsilon\ge \varepsilon_k:=\frac{6\omega^3n^{2\omega}(\sum_{i=1}^m\Vert
	a_i\Vert+\Vert b\Vert)}{\sqrt[\gamma]{\log\frac{k}{\gamma}}}. 
\end{equation}
\end{lemma}
\begin{proof}
	Fix a point $u\in\mathcal{F}_\varepsilon$. Let $r_u^*:=\min_{y\in
	S} a(y)^Tu+b(y)$. Then, by \cite[Theorem~8]{NieSchweighofer},
	there exists some $\gamma>0$ depending on $g_i$'s such that for all
	$k>\gamma\exp((2\omega^2n^\omega)^\gamma)$, it holds that
	\[
		a(Y)^Tu+b(Y)-r_u^*+\frac{6\omega^3n^{2\omega}\Vert
	a(Y)^Tu+b(Y)\Vert}{\sqrt[\gamma]{\log\frac{k}{\gamma}}}\in\mathcal{Q}_k(G). 
	\]
	As $u\in\mathcal{F}_\varepsilon$, we have $r_u^*\ge\varepsilon$.
	Since $\mathcal{F}_\varepsilon\subseteq\mathcal{F}$, the
	assumption $\mathcal{F}\subseteq(-1,1)^m$ implies that $\Vert
	a(Y)^Tu+b(Y)\Vert\le \sum_{i=1}^m\Vert a_i(Y)\Vert+\Vert
	b(Y)\Vert$. Consequently,
	\[
		r_u^*-\frac{6\omega^3n^{2\omega}\Vert
	a(Y)^Tu+b(Y)\Vert}{\sqrt[\gamma]{\log\frac{k}{\gamma}}}\ge
	\varepsilon-\varepsilon_k\ge 0. 
	\]
	Hence, we have $a(Y)^Tu+b(Y)\in\mathcal{Q}_k(G)$ and
	$u\in\mathcal{F}_k$. 
\end{proof}

\begin{theorem}\label{th::rate}
	Suppose that Assumption~\ref{ass::fs}, the Slater condition
	hold for \eqref{eq::lsipp} and $\mathcal{Q}(G)$ is Archimedean.
	Then, there
	exist some $\gamma>0$ depending on $g_i$'s in \eqref{eq::S} and scalars 
	$\bar{\varepsilon}>0$, $L>0$ such that for all integers
	\[
		k>\max\left\{\gamma\exp((2\omega^2n^\omega)^\gamma),\ 
			\gamma\exp\left[\frac{6\omega^3n^{2\omega}(\sum_{i=1}^m\Vert
			a_i\Vert+\Vert b\Vert)}{\bar{\varepsilon}}\right]^\gamma\right\},
    \]
    it holds that
	\[
		0\le p^{\sos}_k-p^*\le L\frac{6\omega^3n^{2\omega}(\sum_{i=1}^m\Vert
	a_i\Vert+\Vert b\Vert)}{\sqrt[\gamma]{\log\frac{k}{\gamma}}}.
	\]
\end{theorem}
\begin{proof}
	Note that all assumptions in Lemma~\ref{lem::perturbation} and
	\ref{lem::conclude} hold. Then, there exist $\gamma>0$ depending on
	$g_i$'s, $\bar{\varepsilon}>0$ and $L>0$ as described
	in the conclusions of Lemma~\ref{lem::perturbation} and
	\ref{lem::conclude}. Recall $\varepsilon_k$ defined in
	\eqref{eq::ek}. 
	By Lemma~\ref{lem::conclude}, it holds
	that $\mathcal{F}_{\varepsilon_k}\subseteq\mathcal{F}_k$ which
	implies $p^*_{\varepsilon_k}\ge p^{\sos}_k$. Moreover, 
	%by assumption, 
	it is easy to check that $\varepsilon_k\le\bar{\varepsilon}$ and hence
	$p_{\varepsilon_k}^*-p^*\le L\varepsilon_k$ by
	Lemma~\ref{lem::perturbation}. Consequently, $p_k^{\sos}-p^*\le
	L\varepsilon_k$. 
\end{proof}

\subsection{On compactness of $\mathcal{F}$}\label{sec::compacttest}
In the last subsection, we assume that the feasible set $\mathcal{F}$ of
\eqref{eq::lsipp} is compact in order to estimate the convergence
rate of the SDP relaxations \eqref{eq::psdplsipp}. In the
following, we show that the compactness of $\mathcal{F}$ can be
determined by solving some LSIPP problems, which can be done by the
SDP relaxations \eqref{eq::psdplsipp} in some cases. Denote by
$\mathbf{0}$ the vector of all zeros in $\RR^n$.
In this subsection, without loss of generality,we assume that 
\begin{ass}\label{ass::0}
	$\mathbf{0}\in\mathcal{F}$, or equivalently, $b(y)\ge 0$ for all $y\in S$. 
\end{ass}

Denote by $0^+\mathcal{F}\subset\RR^m$ the {\itshape recession cone} of
$\mathcal{F}$, i.e., $u\in0^+\mathcal{F}$ if and only if $x+t
u\in\mathcal{F}$ for all $x\in\mathcal{F}$ and $t\ge 0$. As
$\mathcal{F}$ is closed and convex, $\mathcal{F}$ is compact if and
only if $0^+\mathcal{F}=\{\mathbf{0}\}$ by \cite[Theorem~8.4]{convexanalysis}. 

Consider the minimax problem
\begin{equation}\label{eq::minimax}
	r_a^*:=\max_{\Vert u\Vert_2=1}\min_{y\in S}\ a(y)^Tu
\end{equation}

\begin{prop}\label{prop::compact}
	Suppose that Assumption~\ref{ass::0} holds for \eqref{eq::lsipp}.
	Then, its feasible set $\mathcal{F}$ is compact if and only if
	$r_a^*<0$. 
\end{prop}
\begin{proof}
	As $\mathbf{0}\in\mathcal{F}$, by \cite[Theorem~8.3]{convexanalysis}, a
	vector $u\in0^+\mathcal{F}$ if and only if $t
	u\in\mathcal{F}$ for all $t\ge 0$. That is,
	$(a(y)^Tu)\cdot t+b(y)\ge
	0$ for all $y\in S$ and $t\ge 0$, which is true if and only
	if $a(y)^Tu\ge 0$ for all $y\in S$ since $b(y)\ge 0$ on $S$. 
	Note that since $S$ is compact, $\min_{y\in S}\ a(y)^Tu$ is
	continuous in $u$ and then $r_a^*$ is attainable. 
	Therefore, if $\mathcal{F}$ is compact, then
	$u\not\in0^+\mathcal{F}$ for any nonzero $u\in\RR^n$ and hence $r_a^*<0$. 
	%and hence $r_a^*<0$. 
	Conversely, assume that $r_a^*<0$. 
If $u\in0^+\mathcal{F}$ for some nonzero $u\in\RR^n$, then we have
$r_a^*\ge 0$, a contradiction. Then, $0^+\mathcal{F}=\{\mathbf{0}\}$
and hence $\mathcal{F}$ is compact.  
\end{proof}
 
It is clear that the minimax problem \eqref{eq::minimax} is equivalent
to the following problem 
\begin{equation}\label{eq::eqlsipp}
	\left\{
	\begin{aligned}
		r_a^*=\sup_{\Vert u\Vert_2=1,\lambda\in\RR}&\ \lambda\\
		\text{s.t.}&\ a(y)^Tu-\lambda\ge 0,\quad \forall\ y\in S.
	\end{aligned}\right.
\end{equation}
By some rescalings, we can reformulate the problem \eqref{eq::eqlsipp}
as the following LSIPP problems of the form \eqref{eq::lsipp}:
\[
(P_i^+)\qquad	
\left\{
\begin{aligned}
	r_{a,i}^+:=\inf_{u_i=1,\lambda\in\RR}&\ -\lambda\\
	\text{s.t.}&\ a(y)^Tu-\lambda\ge 0,\ \ \forall y\in S,
\end{aligned}\right. 
\]
and 
\[
(P_i^-)\qquad	
\left\{
\begin{aligned}
	r_{a,i}^-:=\inf_{u_i=-1,\lambda\in\RR}&\ -\lambda\\
	\text{s.t.}&\ a(y)^Tu-\lambda\ge 0,\ \ \forall y\in S,
\end{aligned}\right.
\]
for $i=1,\ldots,m$. Then,
\begin{cor}\label{cor::noncompact}
	Suppose that Assumption~\ref{ass::0} holds for \eqref{eq::lsipp}.
	Then, its feasible set $\mathcal{F}$ is compact if and only if 
	$\min\{r_{a,i}^+,\ r_{a,i}^-,\ i=1,\ldots,m\}>0$. 
\end{cor}
Consequently, the compactness of $\mathcal{F}$ can be verified by a
positive lower bound of $\min\{r_{a,i}^+,\ r_{a,i}^-,\
i=1,\ldots,m\}$ which can be obtained by solving $(P^+_i)$'s and
$(P^-_i)$'s using, for instance, discretization methods.

Note that the SDP relaxations \eqref{eq::psdplsipp} and
\eqref{eq::dsdplsipp} produce {\itshape upper} bounds of $p^*$ of
\eqref{eq::lsipp}. The compactness of $\mathcal{F}$ can also be
verified by these SDP relaxations of $(P^+_i)$'s and $(P^-_i)$'s if finite
convergence happens for each problem, which can be detected by the
flat extension condition (or the weaker flat
truncation condition in Remark~\ref{rk::flattruncation}). 
In particular, when $S$ is a closed and bounded interval, the SDP
relaxations \eqref{eq::psdplsipp} and
\eqref{eq::dsdplsipp} of the smallest order are exact for
\eqref{eq::lsipp} by the representation result of nonnegative
polynomials in the univariate
case. This result has been
investigated in \cite{XSQ}. Precisely, without loss of generality, we
can assume that $S=[-1,1]$. Let 
	\begin{equation*}\label{eq::interval}
		\begin{aligned}
			&[-1,1]=\{y_1\in\RR\mid g_1(y_1)\ge
			0\},\quad\text{where}\quad g_1(Y_1)=1-Y_1^2.  
		\end{aligned}
	\end{equation*}
	Recall the well-known result
	\begin{theorem}{\upshape(c.f. \cite{Laurent_sumsof,PR2000})}\label{th::FML}
		Let $h\in\RR[Y_1]$ and $h\ge 0$ on $[-1,1]$, then
		$h=\sigma+\sigma_1(1-Y_1^2)$ where $\sigma,\sigma_1\in\Sigma^2[Y_1]$
		and $\deg(\sigma)$,
		$\deg(\sigma_1(1-Y_1^2))\le2\lceil\deg(h)/2\rceil$. 
	\end{theorem}
	It follows that $p^{\sos}_{k_P}=p^{\mom}_{k_P}=p^*$ holds for
	\eqref{eq::psdplsipp} and \eqref{eq::dsdplsipp} in this case
	\cite{XSQ}. 
	Therefore, the compactness of $\mathcal{F}$ can always be verified
	by SDP relaxations  \eqref{eq::psdplsipp} and
	\eqref{eq::dsdplsipp} of $(P^+_i)$'s and $(P^-_i)$'s when $S$ is a
	closed and bounded interval. 

\vskip 5pt
\begin{examplerv}{\ref{ex::ex2}}
	Consider the feasible set $\mathcal{F}$ of \eqref{eq::ex2} in
	Example~\ref{ex::ex2}, which is clearly noncompact. Note that
	$\mathbf{0}\not\in\mathcal{F}$ and we have proved that the
	minimizer is $(\frac{1}{5}, \frac{125}{104})$. Let
	$w_1=x_1-\frac{1}{5}$, $w_2=x_2-\frac{125}{104}$ and move the set
	$\mathcal{F}$ to 
	\[
		\left\{w\in\RR^2\mid
			\left(w_1+\frac{1}{5}\right)y_1+\left(w_2+\frac{125}{104}\right)-y_2\ge
			0,\ \forall\ y\in S\right\},
	\]
	which contains $\mathbf{0}$. Consider the LSIPP problem
	\[
        (P_1^+)\qquad	
        \left\{
        \begin{aligned}
        	r_{a,1}^+:=\inf_{u_2, \lambda\in\RR}&\ -\lambda\\
        	\text{s.t.}&\ y_1+u_2-\lambda\ge 0,\ \ \forall y\in S. 
        \end{aligned}\right. 
	\]
	It is easy to check that $y_1\ge -2$ for all $y\in S$. Then,
	$(u_2=N+2, \lambda=N)$ is feasible for $(P_1^+)$ for all $N\in\N$.
	Hence, we have $r_{a,1}^+=-\infty$ and therefore $\mathcal{F}$ is
	noncompact by Corollary~\ref{cor::noncompact}. 
\hfill$\square$
\end{examplerv}

\begin{example}
	Consider the ellipse 
	\[
		\mathcal{F}:=\{(x_1,x_2)\in\RR^2\mid 2x_1^2+x_2^2+2x_1x_2+2x_1\le 0\}
	\]
	which can be represented by
	\[
		\{(x_1,x_2)\in\RR^2\mid a(y_1)^Tx+b(y_1)\ge 0,\ \forall y_1\in S\}
	\]
	where 
	\[
		a(Y_1)=(-Y_1^4-2Y_1^3+3Y_1^2+2Y_1-1, -2Y_1(Y_1^2-1))^T,\quad
		b(Y_1)=2Y_1^2,
	\]
	and $S=[-1,1]$ $($see \cite{LSIP}$)$. Clearly, $\mathcal{F}$ is
	compact and $\mathbf{0}\in\mathcal{F}$. As $S=[-1,1]$, all
	problems $(P^+_i)$'s and $(P^-_i)$'s can be solved by the SDP
	relaxations \eqref{eq::psdplsipp} and \eqref{eq::dsdplsipp} of
	order $d_P=2$. 
	Using GloptiPoly, we first solve the SDP relaxation
	\eqref{eq::dsdplsipp} of 
	\[
        (P_1^+)\qquad	
        \left\{
        \begin{aligned}
        	r_{a,1}^+:=\inf_{u_2, \lambda\in\RR}&\ -\lambda\\
        	\text{s.t.}&\
			-y_1^4-2y_1^3+3y_1^2+2y_1-1-2y_1(y_1^2-1)u_2-\lambda\ge
			0,\\
			&\ \forall y_1\in S. 
        \end{aligned}\right. 
	\]
	As the infeasibility of the SDP problem is detected by the SDP
	solver SeDuMi \cite{Sturm99} called by GloptiPoly, we have
	$r_{a,1}^+=+\infty$. We continue to solve $(P_1^-)$, $(P_2^+)$ and
	$(P_2^-)$. The results solved by GloptiPoly are
	$r_{a,1}^-=1$, $r_{a,2}^+=0.5491$ and $r_{a,2}^-=0.7698$, which
	imply the compactness of $\mathcal{F}$ by
	Corollary~\ref{cor::noncompact}. 
\hfill$\square$
\end{example}

\vskip 10pt
%\subsection{Noncompact case}
%In this subsection, 
Note that the index set $S$ is required to be compact to guarantee the
convergence of the SDP relaxations \eqref{eq::psdplsipp} and
\eqref{eq::dsdplsipp}. 
To end this section, 
%we consider the LSIPP problem $(\ref{eq::lsipp})$
%with noncompact index set $S$.
%Since the Archimedean property is violated
%in this case, the optima of
%the SDP relaxations $(\ref{eq::psdplsipp})$
%and $(\ref{eq::dsdplsipp})$ might not converge to $p^*$.
we consider two examples to illustrate how to deal with 
the case when $S$ is noncompact by the homogenization technique
and its applications in polynomial optimzation problems.

\begin{example}\label{ex::counterex}
Consider the LSIPP problem
\begin{equation}\label{eq::counterex}
	p^*:=\inf_{x\in\RR}\ -\frac{x}{2} \quad\text{s.t.}\ (1-3y_2)x+3y_1\ge 0, \
	\forall y\in S,
\end{equation}
where
\[
	S:=\{y\in\RR^2\mid y_1\ge 0, y_1^2-y_2^3\ge 0\}.
\]
%is the gray shadow below the right half of the cusp as shown in Figure \ref{fig::counterex}.
Since $(0,0)\in S$, a feasible $x$ must be nonnegative.  Clearly, $x=0$ is a feasible point.
$x>0$ is feasible if and only if
\[
	0\ge\max_{y\in S}\left\{y_2-\frac{1}{3}-\frac{y_1}{x}\right\}
	=\max_{y\in S}\left\{y_1^{\frac{2}{3}}-\frac{1}{3}-\frac{y_1}{x}\right\}.
\]
The latter maximum is attained at $\frac{8x^3}{27}$ with optimal value
$\frac{4x^2}{27}-\frac{1}{3}$. Thus, the feasible set of (\ref{eq::counterex})
is $[0,\frac{3}{2}]$ and the minimizer is $x^*=\frac{3}{2}$.
%The dual problem of (\ref{eq::counterex}) is
%\[
%\left\{
%\begin{aligned}
%	d^*:=\sup&\ \ -\sum_{y\in S} 3\lambda_yy_1\\
%	\text{s.t.}&\ \ \sum_{y\in S}\lambda_y(1-3y_2)=-\frac{1}{2},\\
%	&\ \lambda_y\ge 0,\ \forall y\in S,
%\end{aligned}\right.
%\]
%where only finitely many $\lambda_y$ take positive values.
%Because the Slater condition holds for any point $x\in (0,\frac{3}{2})$,
%we have $p^*=d^*$.
%
%Let $G:=\{g_1,g_2\}$ with $g_1=Y_1$ and $g_2=Y_1^2-Y_2^3$. By definition,
%\[
%	\mathcal{Q}(G):=\left\{\sum_{j=0}^2\sigma_jg_j\ \Big|\
%	  \sigma_j \in \Sigma^2, j=0,1,2 \right\}.
%\]
%Clearly, $\Qm(G)$ is not Archimedean since $S$ is noncompact.
%For a finite sequence of real numbers
%$z:=(z_{(i,j)})_{(i,j)\in \mathbb{N}^2_6}$,
%\[
%	M_1(z)=\left[
%		\begin{array}{ccc}
%			z_{(0,0)}& z_{(1,0)} & z_{(0,1)}\\
%			z_{(1,0)}& z_{(2,0)} & z_{(1,1)}\\
%			z_{(0,1)}& z_{(1,1)} & z_{(0,2)}\\
%		\end{array}
%	\right]
%\]
%is the associated $1$-th moment matrix and
%\[
%	M_1(g_2z)=\left[
%		\begin{array}{ccc}
%			z_{(2,0)}-z_{(0,3)}& z_{(3,0)}-z_{(1,3)} & z_{(2,1)}-z_{(0,4)}\\
%			z_{(3,0)}-z_{(1,3)}& z_{(4,0)}-z_{(2,3)} & z_{(3,1)}-z_{(1,4)}\\
%			z_{(2,1)}-z_{(0,4)}& z_{(3,1)}-z_{(1,4)} & z_{(2,2)}-z_{(0,5)}\\
%		\end{array}
%	\right]
%\]
%is the associated $1$-th localizing moment matrix at $g_2$.   \hfill$\square$
%\begin{examplerv}{\ref{ex::counterex}}

Obviously, $\Qm(G)$ is not Archimedean.  For any $k\in\N$,
we know from \cite[Example 2.10]{SIAMGWZ}
that $(1-3Y_2)x+3Y_1\in\Qm_k(G)$ if and only if $x=0$, i.e.,
%By Theorem \ref{th::moncon}, $p^{\mom}_k=p^{\sos}_k=0$ for each $k\ge d_P$.
$p^{\sos}_k=0$ for each $k\ge d_P$. Now we show that
$p^{\mom}_k=p^{\sos}_k$ for each $k\ge d_P$. In fact, for the SDP relaxation
$(\ref{eq::dsdplsipp})$ of the problem $(\ref{eq::counterex})$,
let $\mu$ be a probability measure with uniform distribution in the following
subset of $S$:
\[
	S_1:=\{(y_1,y_2)\in\RR^2\mid 1\le y_1\le 2,\ 0\le y_2\le 1\}
\]
and $z^{(\mu)}$ be the truncated moment sequence with representing measure $\mu$
up to order $2k$.
%if we let
%\[
%	y_{\alpha}:=2\int_0^1\ud x_2\int_1^2x_1^{\alpha_1}x_2^{\alpha_2}\ud x_1,
%	\quad \forall \alpha\in\N^2_{2k},
%\]
It can be verified that
%$y=(y_\alpha)_{\alpha\in\N^2_{2k}}$
$z^{(\mu)}$
is a feasible point of $(\ref{eq::dsdplsipp})$ and
its corresponding truncated moment matrix and localizing moment matrices
are positive definite since $S_1$ has nonempty interior.
Then $p^{\mom}_k=p^{\sos}_k$ follows by the conic duality theorem.
Hence, both SDP relaxations $(\ref{eq::psdplsipp})$
and $(\ref{eq::dsdplsipp})$ do not converge to the optimum.

	%By definition, we have
Now let us see how to solve this issue by homogenization.
We first homogenize the defining polynomials of $S$ by new variable
$y_0$ and define the following bounded set
	\[
		\begin{aligned}
			\So&:=\{\td{y}=(y_0, y_1, y_2)\in\RR^3\mid y_1\ge 0,\ y_0y_1^2-y_2^3\ge 0,\ y_0>
0,\ \Vert \td{y}\Vert_2^2=1\}.\\
%\St&=\{(y_0, y_1, y_2)\in\RR^3\mid y_1\ge 0,\ y_0y_1^2-y_2^3\ge 0,\ y_0\ge
%0,\ \Vert \td{y}\Vert_2^2=1\}.\\
%		\widehat{S}&=\{(y_1,y_2)\in\RR^2\mid y_1\ge 0, y_2\le 0, y_1^2+y_2^2=1\}.
		\end{aligned}
	\]
	Then, we homogenize the constraint polynomial of
	\eqref{eq::counterex} with respect to $Y$ and consider the problem 
%	By \cite[Proposition 4.2]{SIPPSDP}, the problem
%	$(\ref{eq::counterex})$ is equivalent to 
	\[
		\inf_{x\in\RR}\ -\frac{x}{2} \quad\text{s.t.}\ (y_0-3y_2)x+3y_1\ge 0, \
	\forall \td{y}=(y_0,y_1,y_2)\in {\sf closure}(\So),
	\]
	which is equivalent to \eqref{eq::counterex} by \cite[Proposition
	4.2]{SIPPSDP}. However, the set ${\sf closure}(\So)$ is not in the
	form of basic semialgebraic sets. Hence, we define the following
	compact set
	\[
\St:=\{(y_0, y_1, y_2)\in\RR^3\mid y_1\ge 0,\ y_0y_1^2-y_2^3\ge 0,\ y_0\ge
0,\ \Vert \td{y}\Vert_2^2=1\}.
	\]
	We say $S$ is {\itshape closed at $\infty$} \cite{exactJacNie} if
	$\St={\sf closure}(\So)$, in which case
	\eqref{eq::counterex} is equivalent to 
%	After homogenization, the problem $(\ref{eq::counterex})$ is
%	reformulated as
	\begin{equation}\label{eq::counterex2}
		\inf_{x\in\RR}\ -\frac{x}{2} \quad\text{s.t.}\ (y_0-3y_2)x+3y_1\ge 0, \
	\forall \td{y}=(y_0,y_1,y_2)\in \St.
	\end{equation}
Note that $S$ is indeed closed at $\infty$. In fact,
for every $(0, v_1, v_2)\in\St\backslash\So$, let
\[
v^{(\varepsilon)}:=\left(\varepsilon,\ v_1,\
\sqrt[3]{\varepsilon v_1^2+v_2^3}\right).
\]
Then $\{v^{(\varepsilon)}/\Vert v^{(\varepsilon)}\Vert_2\}_{\varepsilon>0}
\subseteq\So$ and
$\lim_{\varepsilon\rightarrow 0}v^{(\varepsilon)}
/\Vert v^{(\varepsilon)}\Vert_2=(0, v_1, v_2)$. Hence, we have
$\St\backslash\So\subseteq{\sf closure}(\So)$ and
so $S$ is closed at $\infty$. Clearly, the quadratic module associated
with $\St$ is Archimedean and $\bar{x}=1$ is
a 
%extended 
Slater point of $(\ref{eq::counterex2})$.
%if we let $\bar{x}=1$.
%Hence, the assumptions in Theorem \ref{th::noncompact} are
%satisfied. 
With GloptiPoly, we solve the SDP relaxations \eqref{eq::dsdplsipp} of
\eqref{eq::counterex2} and get the following numerical results:
$p_2^{\mom}=-1.2124\times 10^{-8}$ and $p_3^{\mom}=-0.7500$.
The flat extension condition is satisfied for $k=3$ and we obtain
the certified optimum $-0.7500$. By Proposition \ref{pro::certificate},
the extracted numerical active index set of the minimizer $x^*=3/2$ is
${(0.5773,0.5774,0.5774)}$ which corresponds to
%the point
$(1,1)\in S$.
%where the line $l_{x^*}(y)=0$ is tangent to $S$.
\hfill$\square$
%\begin{figure}
%\centering
%\includegraphics[width=0.35\textwidth]{pic2.pdf}
%\caption{The semialgebraic set $S$ (gray) in Example \ref{ex::counterex} and the
%line $(1-3y_2)x^*+3y_1=0$ (red).}\label{fig::counterex}
%\end{figure}
\end{example}
\begin{remark}
	Note that not every set $S$ of the form $(\ref{eq::S})$ is closed at $\infty$
	even when it is compact \cite[Example 5.2]{NieDisNon}.
	However, it is shown in \cite[Theorem 4.10]{SIPPSDP} that the
	closedness at $\infty$ is a {\itshape generic} property.
\end{remark}

%Consider the general polynomial optimization problem
%\begin{equation}
%	\left\{
%	\begin{aligned}
%		f^*:=\inf_{y\in\RR^n}&\ f(y)\\
%		\text{s.t.}&\ g_1(y)\ge 0,\ldots,g_s(y)\ge 0.
%	\end{aligned}\right.
%	\label{eq::pp}
%\end{equation}
%Recall that the feasible set of $(\ref{eq::pp})$ is denoted by $S$.
%We assume that $-\infty<f^*<\infty$.
%The problem $(\ref{eq::pp})$ can be reformulated as
%an LSIPP problem
%\begin{equation}
%	-f^*=\inf_{x\in\RR} -x \quad\text{s.t.}\ f(y)-x\ge 0, \ \forall y\in S.
%	\label{eq::rpp}
%\end{equation}
%Now we the problem $(\ref{eq::pp})$ with noncompact feasible set $S$.
%When $S$ is closed at $\infty$,
%%by Theorme \ref{th::eq},
%by the homogenization technique used in Example~\ref{ex::counterex}, 
%the problem $(\ref{eq::rpp})$
%%can be reformulated by homogenization as
%is equivalent to
%%becomes
%\begin{equation}\label{eq::hrpp}
%	\left\{
%	\begin{aligned}
%		\td{f}^*:=\sup_{x\in\RR}&\ \ x \\
%		\text{s.t.}&\ \ f^h(\td{y})-xy_0^{D_f}\ge 0,
%		\ \forall \td{y}\in \St,
%	\end{aligned}\right.
%\end{equation}
%where $D_f=\deg(f)$.

\begin{example}\label{ex::stable}
%\cite[Example 4.5]{DNPKKT}
Consider the following polynomial optimization problem
\begin{equation}\label{eq::stable}
	\left\{
	\begin{aligned}
		\inf_{y\in\RR^2}&\ \ f(y):=y_1^2+y_2^2\\
		\text{s.t.}&\ \  y\in
	S:=\{y\in\RR^2\mid g_1(y)\ge 0,\ g_2(y)\ge 0,\ g_3(y)\ge 0\},
		%g_1(y)=y_2^2-1\ge 0,\\
%		&\ \ g_2(y)=y_1^2-y_1y_2-1\ge 0,\\
%		&\ \ g_3(y)=y_1^2+y_1y_2-1\ge 0.
	\end{aligned}\right.
\end{equation}
where 
\[
g_1(Y)=Y_2^2-1,\  g_2(Y)=Y_1^2-Y_1Y_2-1,\ g_3(Y)=Y_1^2+Y_1Y_2-1. 
\]
%where $M$ is a positive constant.
It was shown in \cite{DNPKKT,LSTZ,exactJacNie}
that the global minimizers and global minimum are
\[
	\left(\pm\frac{1+\sqrt{5}}{2},\pm 1\right)\approx(\pm 1.618,\pm 1)
	\quad\text{and}\quad
	2+\frac{(1+\sqrt{5})}{2}\approx 3.618.
\]
Because $S$ is noncompact,
%by the argument in \cite{DNPKKT}, 
the classic Lasserre's SDP relaxations \cite{LasserreGlobal2001} of
$(\ref{eq::stable})$ can only
provide lower bounds $2$ no matter how large the order is (c.f.
\cite{DNPKKT}). 
%Since $S$ has nonempty interior and the relaxation of
%$(\ref{eq::stable})$ is feasible, by
%%duality theory in convex programming,
%\cite[Theorem 4.2]{LasserreGlobal2001}, $f^{\mom}_k$ equals $f^{\sos}_k$
%for each
%$k$ and therefore cannot converge to the optimum as $k\rightarrow\infty$,
%either.
%We consider the case when $M=1$. The feasible set $S$ is depicted
%in gray in Figure \ref{fig::stablex}.
%The global minimizers are $\left(\pm\frac{1+\sqrt{5}}{2},\pm 1\right)
%\approx(\pm 1.618,\pm 1)$ and its global
%minimum is $2+\frac{(1+\sqrt{5})}{2}\approx 3.618$.

Clearly, any polynomial optimization problem of the form
\eqref{eq::stable} can be equivalently reformulated to the following
LSIPP problem 
\begin{equation}\label{eq::po2lsipp1}
f^*=\sup_{x\in\RR}\ \ x \quad\text{s.t.}\  \ f(y)-x\ge 0, \ \forall y\in S.
\end{equation}
As $S$ is noncompact, we use the homogenization technique in
Example~\ref{ex::counterex} to convert this LSIPP problem to 
\begin{equation}\label{eq::po2lsipp}
		\td{f}^*:=\sup_{x\in\RR}\ \ x \quad 
		\text{s.t.}\ \ f^h(\td{y})-xy_0^{\deg(f)}\ge 0, \ \forall
		\td{y}\in \St, 
	\end{equation}
where $f^h$ is the homogenization of $f$ and $\St$ is
defined as in Example~\ref{ex::counterex}. 
%It is easy to check that 
Suppose that $f^*>-\infty$, then
the Slater condition holds for \eqref{eq::po2lsipp} if and only if 
\begin{equation}\label{con::slater2}
	\hat{f}(y)>0,\quad\forall \ y\in\widehat{S}:=\{y\in\RR^n\mid
		\hat{g}_1(y)\ge 0,\ \ldots,\ \hat{g}_s(y)\ge 0,\ \Vert
	y\Vert_2^2=1\},
\end{equation}
where $\hat{f}$ and $\hat{g}_i$'s are the homogeneous parts of $f$ and
$g_i$'s of the highest degree. 
Moreover, if the condition \eqref{con::slater2} holds for
\eqref{eq::po2lsipp}, it is easy to see that any feasible point of
\eqref{eq::po2lsipp1} is also feasible for \eqref{eq::po2lsipp}. Thus,
%\eqref{eq::po2lsipp1} is equivalent to \eqref{eq::po2lsipp} 
$\td{f}^*=f^*$ and we can
compute them by the SDP relaxations \eqref{eq::psdplsipp} and
\eqref{eq::dsdplsipp}. 

Obviously, the condition \eqref{con::slater2} holds for
\eqref{eq::stable}. We compute the relaxations \eqref{eq::dsdplsipp}
of \eqref{eq::po2lsipp} with GloptiPoly.  For $k=3$, the flat
extension condition is satisfied and we get the numerically certified optimum
$f^{\mom}_3=3.6180$. The extracted active index set
is $\{(0.4653,\pm 0.7529,\pm 0.4653)\}$ which corresponds to the set
of global minimizers $(\pm 1.6181,\pm 1)$.
\hfill$\square$
%\begin{figure}
%\centering
%\includegraphics[width=0.35\textwidth]{pic3.pdf}
%\caption{The semialgebraic set $S$ in Example \ref{ex::stable}
%}\label{fig::stablex}
%\end{figure}
\end{example}

\begin{remark}
(i) By \cite[Theorem 5.1 and 5.3]{Marshallopti}, the condition
\eqref{con::slater2} holds if and only if $f$ is {\itshape stably bounded from
below} on $S$, i.e., $f$ remains bounded from below on $S$ for all
sufficiently small perturbations of the coefficients of $f, g_1,\ldots,g_s$.
Therefore, we give an SDP relaxation method in
Example~\ref{ex::stable} for solving the class of polynomial
optimization problems whose objective polynomials are stably bounded
from below on noncompact feasible sets; (ii) Note that the stably
boundedness from below of $f$ on $S$ is irrelevant to the closedness at $\infty$
of $S$. For example, the set $\{y\in\RR^2\mid y_2\ge y_1^2\}$ is not
closed at $\infty$ but $Y_2^2$ is stably bounded from below on it; the
set $S$ in Example~\ref{ex::counterex} is closed at $\infty$ but
$Y_1$ is not stably bounded from below on it. 
\end{remark}

\section{Some extensions}\label{sec::extension}
In this section, we discuss some extensions of the SDP relaxations
\eqref{eq::psdplsipp}  for
\eqref{eq::lsipp} in Section~\ref{section::SDPrelax} to more general
semi-infinite programming problems.
\subsection{LSIP with semi-algebraic functions}\label{sec::extension1}
Inspired by Lasserre and Putinar's work \cite{LP2010},
we would like to point out that
the SDP relaxation method proposed in this paper is applicable to
%(\ref{eq::psdplsipp}) and (\ref{eq::dsdplsipp})
a more general subclass of LSIP problems.
%as follows
Denote by $\mathcal{X}\subseteq\RR^m$ a convex polyhedron defined by finitely
many linear inequalities in the variables $X$.
Denote by $\mathcal{A}$ the algebra consisting of functions
%$\RR^n\rightarrow\RR$
generated by finitely many of the dyadic operations $\{+,-,/,\vee,\wedge\}$
and monadic operations $\{\vert\cdot\vert,(\cdot)^{1/p},p\in\N\}$
on polynomials in $\RR[Y]$,
where $f\vee g:=\max[f,g]$ and $f\wedge g:=\min[f,g]$ for $f,g\in\RR[Y]$.
For example,
\[
	\sqrt{\vert f(Y)\vert+g(Y)^2}\wedge\left(\frac{1}{g(Y)}\vee f(Y)\right)\in\mathcal{A}.
\]
Note that every function in $\mathcal{A}$ has a {\itshape lifted basic semi-algebraic
representation} \cite[Definition 1]{LP2010}.
Then, the SDP relaxations (\ref{eq::psdplsipp}) and (\ref{eq::dsdplsipp}) can be
extended for more general LSIP problems of the form
%(e.g., Problem \ref{pro::6}, \ref{pro::8}, \ref{pro::14} and \ref{pro::10}--\ref{pro::16})
\begin{equation}\label{eq::lsipp2}
\left\{
\begin{aligned}
	p^*:=\inf_{x\in\mathcal{X}}&\ c^Tx\\
	\text{s.t.}&\ a^l(y)^Tx+b_l(y)\ge 0,\ \ \forall y\in S \text{ and } l=1,\ldots,t,
\end{aligned}\right.
\end{equation}
where $c\in\RR^m$, $a^l(Y)\in\mathcal{A}^m$, $b_l(Y)\in\mathcal{A}$,
$l=1,\ldots,t$ and
\begin{equation}\label{eq::Snew}
	S:=\{y\in\RR^n\mid g_1(y)\ge 0, \ldots, g_s(y)\ge 0\},
\end{equation}
where $g_j(Y)\in\mathcal{A}$, $j=1,\dots,s$. In fact, as shown in \cite{LP2010},
the nonnegativity test
of semi-algebraic functions in $\mathcal{A}$ on the set $(\ref{eq::Snew})$
can be reduced to an equivalent polynomial funcation case in a {\itshape lifted}
space by adding some new variables. For instance, with $f,h,g_1,g_2\in\RR[Y_1]$,
\[
	\sqrt{f(y_1)}-1/h(y_1)\ge 0\quad\text{on}\quad
	\{y_1\in\RR\mid \vert g_1(y_1)\vert g_2(y_1)\ge 1\}
\]
can be written as $y_2-y_3\ge 0$ on
\[
		\{y\in\RR^{4}\mid  f(y_1)=y_2^2,\ y_2\ge 0,\ h(y_1)y_3=1,\
		 y_4g_2(y_1)\ge 1,\ g_1(y_1)^2=y_4^2,\ y_4\ge 0\}.
\]
Consequently, 
%like in the polynomial problem case,
the extension to $\mathcal{A}$ of Putinar's
Positivstellensatz (\cite[Theorem 2]{LP2010}) provides us
representations of each nonnegativity
constraint in (\ref{eq::Snew}) via s.o.s and the dual theory of moments.
Notice that the constraint $x\in\mathcal{X}$ is linear in $X$.
Hence,
%by adding some extra linear inequality constraints in $X$
%defining $\mathcal{X}$,
SDP relaxations as (\ref{eq::psdplsipp}) and its dual (\ref{eq::dsdplsipp}) can be
similarly derived for (\ref{eq::lsipp2}) by lifting the parameter space.
Moreover, the convergence results and stopping criterion,
as Theorem \ref{th::convergence}, \ref{th::moncon} and Proposition \ref{pro::certificate},
can also be analogously established.
As might be expected, additional parameters in the lifted space can cause
more computational burden in resulting SDP problems.
However, as pointed out in \cite{LP2010}, the {\itshape running intersection
property} holds true for these lifted parameters. Hence, like for polynomial
optimization problems \cite{LasserreSparsity,WKKMsparsity},
some sparse SDP relaxations for (\ref{eq::lsipp2}) can be explored to reduce
the computational cost.

\begin{example}\label{ex::appro}
Consider the one-sided $L_1$ approximation problem
\begin{equation}\label{eq::L1}
	\left\{\begin{aligned}
		\min_{x\in\RR^n}&\ \sum_{i=1}^n\frac{x_i}{i}\\
		\text{s.t.}&\ \sum_{i=1}^ny^{i-1}x_i-b(y)\ge 0,
		\quad \forall y\in [0,1].
		%&\ x_j\ge 0,\quad j=1,\ldots,n.
	\end{aligned}\right.
\end{equation}
Here, we approximate two (semi-algebraic) functions \cite{GG} on $[0,1]$:
\[
	\text{\upshape(i):}\ \ b(y)=\frac{1}{2-y},\ n=8;\quad \text{and}\quad 
	\text{\upshape(ii):}\ \
	b(y)=-\frac{1}{1+y^2},\ n=10.
\]
Clearly, in order to convert this problem into LSIPP, 
we can add lifted variable $z$ such that $(2-y)z=1$ for case
(i) and $(1+y^2)z=-1$ for case (ii). Then, we
solve the SDP relaxations \eqref{eq::psdplsipp} with order $k=4$ for
(i) and $k=5$ for (ii) by YALMIP. The obtained coefficients
$x_i$'s are listed below
\[
	\begin{aligned}
		\text{(i):}\
		&(0.5000,0.2501,0.1227,0.0787,-0.0258,0.1226,-0.0967,0.0484),\\
		\text{(ii):}\
		&(-1.0000,-0.0000,1.0016,-0.0202,-0.8566,-0.6123,2.6222,-2.6059,\\
		&1.1881,-0.2168)
	\end{aligned}
\]
We show the accuracy of the computed optimal approximations
(denoted by $f$) of $b(Y)$ in Figure \ref{fig::approx}.
\begin{figure}
\centering
\includegraphics[width=0.55\textwidth]{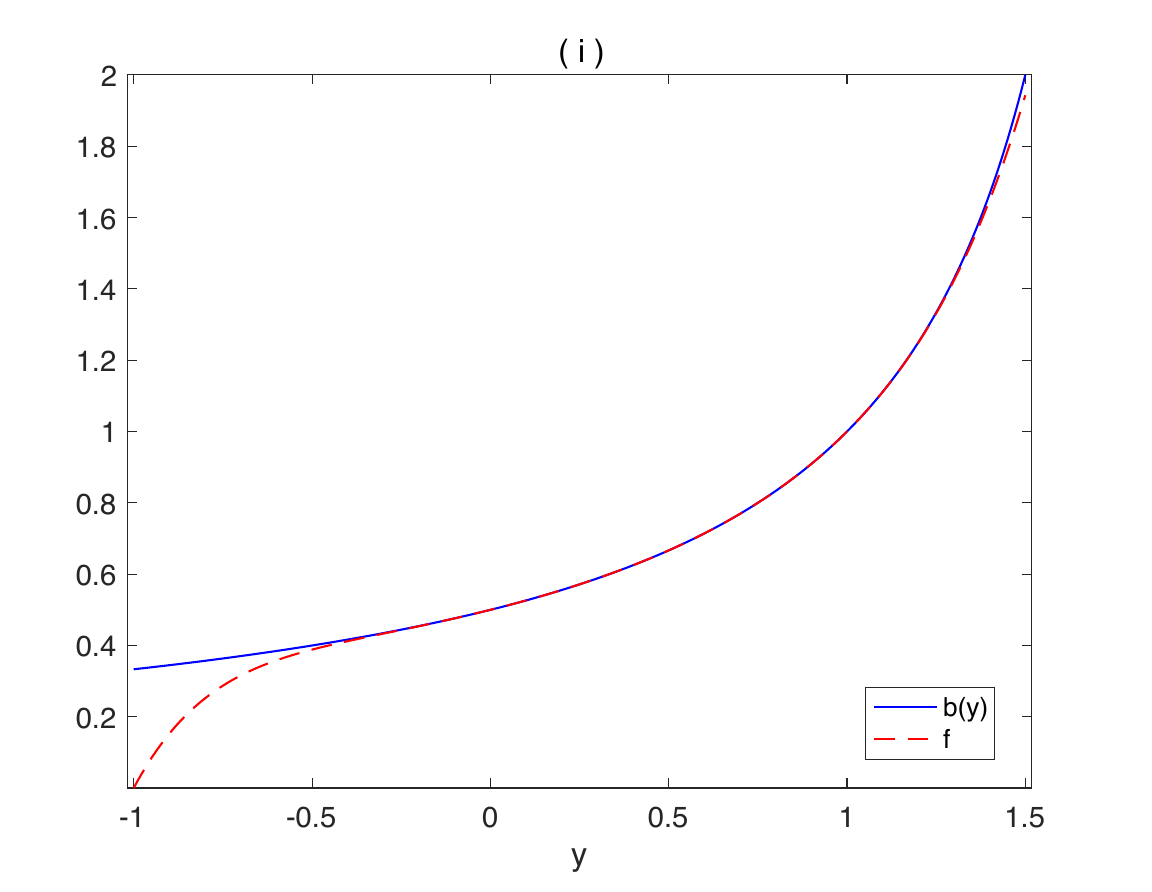}\quad
\includegraphics[width=0.35\textwidth]{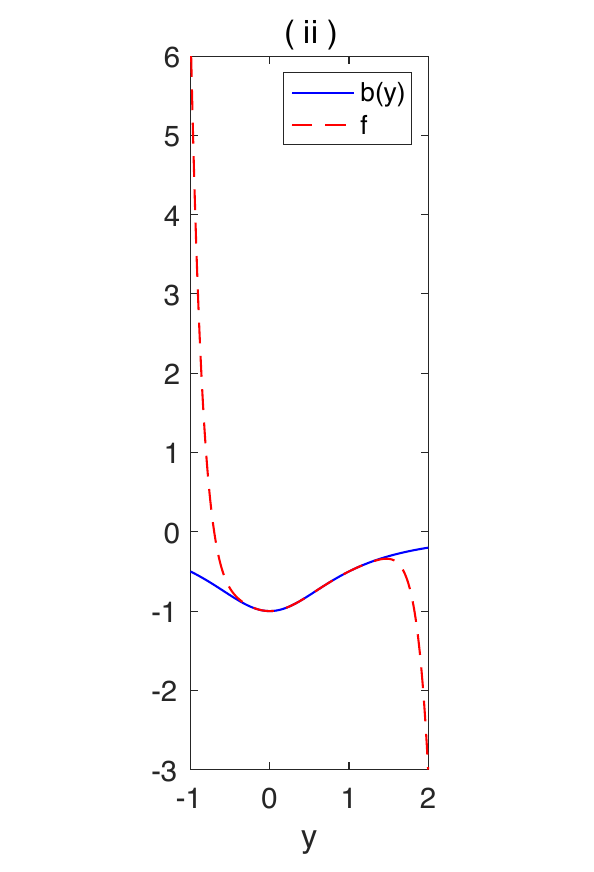}\\
\caption{\label{fig::approx}
Pictures for the univariate approximation problems in Example
\ref{ex::appro}.}
\end{figure}
\end{example}

\begin{example}\label{ex::sa2}
In $(x_1,x_2)$-plane, consider the intersection area $\mathcal{F}$  of $x_2\ge 0$,
$1-x_1^2-x_2^2\ge 0$ and $x_1+1-x_2^2\ge 0$. Then, $\mathcal{F}$ can
also be seen as the interstion of $x_2\ge 0$, the half planes defined
by the lines tangent to $1-x_1^2-x_2^2=0$ in the first quadrant and  
to $x_1+1-x_2^2=0$ in the second quadrant, as shown in Figure \ref{fig::sa2}. 
Therefore, it is easy to check that
\[
	\mathcal{F}=\{(x_1,x_2)\in\RR^2\mid a(y_1)^Tx+b(y_1)\ge 0,\
	\forall\ y_1\in[-1,1]\}\cap\mathcal{X},
\]
where $\mathcal{X}=\{(x_1,x_2)\in\RR^2\mid x_2\ge 0\}$, 
\[
	\begin{aligned}
		&a_1(y_1)=-\min[y_1,0]-\max[y_1,0]y_1,\\
		&a_2(y_1)=2\min[y_1,0]\sqrt{y_1+1}-\max[y_1,0]\sqrt{1-y_1^2},\\
		&b(y_1)=-\min[y_1,0](2+y_1)+\max[y_1,0].
	\end{aligned}
\]
Here, the equations $a(y_1)^TX+b(y_1)=0$ for $y_1\in[-1,1]$, in fact, represent the
tangent lines mentioned above.
Consider the LSIP problem 
\[
\left\{
\begin{aligned}
	p^*:=\min_{x\in\mathcal{X}}&\ c^Tx\\
	\text{s.t.}&\ a(y_1)^Tx+b(y_1)\ge 0,\ \ \forall y\in[-1,1],
\end{aligned}\right.
\]
in two cases: (i) $c=(1,-1)$; (ii) $c=(-1,-1)$. We can verify that the
minima and minimizers are: (i) $p^*=-\frac{5}{4}$,
$x^*=(-\frac{3}{4},\frac{1}{2})$; (ii) $p^*=-\sqrt{2}$,
$x^*=(\frac{\sqrt{2}}{2},\frac{\sqrt{2}}{2})$. Now, we first convert
this LSIP problem into LSIPP by the lifting method and then solve it by the
SDP relaxations \eqref{eq::psdplsipp}. As $2\cdot\min[y_1,0]=y_1-|y_1|$ and
$2\cdot\max[y_1,0]=y_1+|y_1|$, we can write
\[
	\mathcal{F}=\{(x_1,x_2)\in\RR^2\mid \td{a}(y)^Tx+\td{b}(y)\ge 0,\
	\forall\ y\in S\}\cap\mathcal{X},
\]
where 
\[
	\begin{aligned}
		\td{a}_1(y)&=-y_1+y_2-y_1^2-y_1y_2,\\
		\td{a}_2(y)&=2y_1y_3-2y_2y_3-y_1y_4-y_2y_4,\\
		\td{b}(y)&=-(2+y_1)(y_1-y_2)+y_1+y_2, 
	\end{aligned}
\]
and 
\[
	S=\{y\in\RR^4\mid -1\le y_1\le 1, y_2^2=y_1^2, y_2\ge 0,
	1+y_1=y_3^2, y_3\ge 0, 1-y_1^2=y_4^2, y_4\ge 0\}. 
\]
Now we can use the SDP relaxations \eqref{eq::psdplsipp} to solve the
obtained LSIPP problems. The approximate minima and minimizers are:
(i) $p_5^{\sos}=-1.2496$, $\td{x}^{(5)}=(-0.7575,0.4921)$; (ii)
$p_5^{\sos}=-1.3936$, $\td{x}^{(5)}=(0.6544,0.7392)$.
\begin{figure}
\centering
\includegraphics[width=0.55\textwidth]{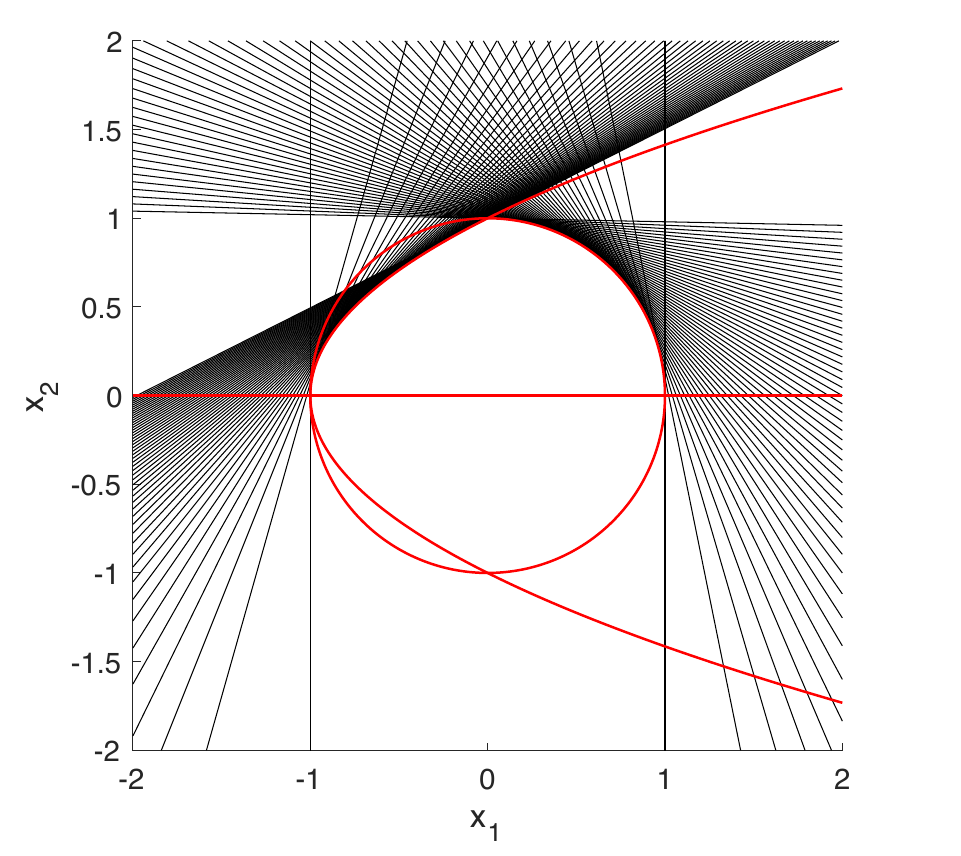}
\caption{\label{fig::sa2}
The feasible set $\mathcal{F}$ in Example \ref{ex::sa2}.}
\end{figure}
\end{example}

\subsection{S.O.S-convex objectives}
Next, by the exact SDP relaxations
for classes of nonlinear SDP problems proposed in \cite{JL2012}, we
extend the SDP relaxation method in Section~\ref{section::SDPrelax} to
the following semi-infinite programming problem
\begin{equation}\label{eq::sosconvexsip}
%(P)\qquad	
\left\{
\begin{aligned}
	h^*:=\inf_{x\in\RR^m}&\ h(x)\\
	\text{s.t.}&\ a(y)^Tx+b(y)\ge 0,\ \ \forall y\in S,
\end{aligned}\right.
\end{equation}
where $a(Y)$, $b(Y)$, $S$ are defined as in \eqref{eq::lsipp} and
$h(X)\in\RR[X]$ is s.o.s-convex polynomial. Recall that 
\begin{definition}{\upshape(\cite{SRCSHN})}
	A polynomial $h\in\RR[Y]$ is s.o.s-convex if its Hessian $\nabla^2
	h$ is a s.o.s, i.e., there are an integer $r$ and a matrix
	polynomial $H\in\RR[Y]^{r\times n}$ such that $\nabla^2
	h(Y)=H(Y)^TH(Y)$.
\end{definition}
While checking the convexity of a polynomial is generally
NP-hard \cite{Ahmadi2013},
s.o.s-convexity can be checked numerically by solving an
SDP, see \cite{SRCSHN}. 

We first relax \eqref{eq::sosconvexsip} with arbitrary convex polynomial
objective function as 
\begin{equation}\label{eq::sosconvexsip2}
	h^{\sos}:=\inf_{x\in\RR^m}\ \ h(x)\quad\text{s.t.}\
	a(Y)^Tx+b(Y)\in\Qm(G).
\end{equation}
\begin{theorem}\label{th::equi2}
	If $h(X)$ is convex, $\Qm(G)$ is Archimedean and
	the Slater condition holds for \eqref{eq::sosconvexsip}, then
	$h^{\sos}=h^*$.
\end{theorem}
\begin{proof}
	As $h(X)$ is convex, replacing the function $c^TX$ in the proof of
	Theorem~\ref{th::equi} by $h(X)$, all arguments in the proof
	of Theorem~\ref{th::equi} are still valid. In particular,
	\eqref{eq::inequality} become
	\[
		\begin{aligned}
			p^{\sos}-p^*&\le h(\hat{x})-p^*\\
			&\le(1-\delta)h(x')+\delta h(\bar{x})-p^*\\
			&=(h(x')-p^*)+\delta(h(\bar{x})-h(x'))\\
			&<\frac{\varepsilon}{2}+\frac{\varepsilon}{2}
			=\varepsilon,
		\end{aligned}
	\]
	due to the convexity of $h$. 
%	$h(\hat{x})\le(1-\delta)h(x')+\delta h(\bar{x})$. Hence, all
%	arguments in the proof of Theorem~\ref{th::equi} are still valid
%	and 
	Then the conclusion follows.
\end{proof}
Recall that $\mathcal{F}_k$ denotes the feasible set of
\eqref{eq::psdplsipp}.  For $k\ge d_P$,
replacing $\Qm(G)$ in $(\ref{eq::sosconvexsip2})$ by its $k$-th
truncation $\Qm_k(G)$, we get
\begin{equation}\label{eq::sosconvexsdp}
	h_k^{\sos}:=\inf\ \ h(x)\quad\text{s.t.}\
	x\in\mathcal{F}_k. 
\end{equation}
Consequently, it follows from the proof of Theorem
\ref{th::convergence} that 
\begin{theorem}\label{th::convergence2}
	If $h(X)$ is convex, $\Qm(G)$ is Archimedean and
	the Slater condition holds for \eqref{eq::sosconvexsip}, then
	$h^{\sos}_k$ decreasingly converges to $h^*$ as
	$k\rightarrow\infty$.
\end{theorem}
Moreover, if $h(X)$ is s.o.s-convex, we point out that for each $k\ge d_P$, 
\eqref{eq::sosconvexsdp} is equivalent to a {\itshape single} SDP
problem under certain conditions as shown in \cite{JL2012}. In fact,
it is easy to see that there exist some integers
$l, t$ and real $t\times t$ symmetric matrices $\{A_i\}_{i=0}^m$ and
$\{B_j\}_{j=1}^l$ such that $\mathcal{F}_k$ is identical with 
\[
	\left\{x\in\RR^m\ \Big|\ \exists w\in\RR^l,\ \text{s.t.}\ A_0+\sum_{i=1}^m
	A_ix_i+\sum_{j=1}^l B_jw_j\succeq 0\right\},
\]
where $w_j$'s correspond to the entries of $Z_j$'s in
\eqref{eq::SDPform}. Thus, \eqref{eq::sosconvexsdp} becomes
\begin{equation}\label{eq::sosconvexsdp2}
	\left\{
	\begin{aligned}
		h_k^{\sos}=\inf_{x\in\RR^m, w\in\RR^l}&\ \ h(x)\\
		\text{s.t.}&\ \  A_0+\sum_{i=1}^m A_ix_i+\sum_{j=1}^l
		B_jw_j\succeq 0. 
	\end{aligned}\right.
\end{equation}
Let $d\in\N$ is the {\itshape smallest} even number such that $d\ge\deg{h}$. 
Denote the variables $W=(W_1,\ldots,W_l)$ and let $\Sigma^2_d[X,W]$ be
the set of sums of squares of polynomials in $\RR[X,W]$ of degree up
to $d$.   
Consider the dual problem of \eqref{eq::sosconvexsdp2}
\begin{equation}\label{eq::sosconvexsdpd}
	\left\{
	\begin{aligned}
		\lambda_k:=\sup_{\lambda\in\RR, V\in\mathbb{S}_+^m}&\ \lambda\\
		\text{s.t.}&\ h(X)-\sum_{i=1}^m\langle A_i, V\rangle\cdot X_i-\sum_{j=1}^l\langle B_j,
		V\rangle\cdot W_j-\langle A_0,
		V\rangle-\lambda\\
		&\ \in\Sigma^2_d[X,W],
	\end{aligned}\right.
\end{equation}
which can be reduced to an SDP problem as shown in
Section~\ref{section::SDPrelax}. Clearly,  $h^{\sos}_k\ge \lambda_k$. 
We have $h^{\sos}_k=\lambda_k$ under
certain conditions (c.f. \cite[Theorem~3.1]{JL2012}).
Therefore, an SDP relaxation method
is drived for \eqref{eq::sosconvexsip} with s.o.s-convex objectives. 

\begin{example}\cite{Coope1985}
Consider the followin SIP problem
\[
\left\{
\begin{aligned}
	h^*=\inf_{x\in\RR^3}&\ x_1^2+x_2^2+x_3^2\\
	\text{s.t.}&\
	-(y_1+y_2^2+1)x_1-(y_1y_2-y_2^2)x_2-(y_1y_2+y_2^2+y_2)x_3-1\ge
	0,\\
	&\ \forall y\in [0,1]^2.
\end{aligned}\right.
\]
The exact minimum and minimizer are $h^*=1$ and $x^*=(-1,0,0)$
\cite{Coope1985}. Clearly, the objective function is s.o.s-convex.
Solving the SDP problem
\eqref{eq::sosconvexsdpd}, 
%to get $h_k^{\sos}$,
we obtain $\lambda_1=1.0000$ with approximate minimizer $(-1.0000,
2.3149\times 10^{-6},-1.0410\times 10^{-5})$.
\end{example}

\begin{example}
Consider the following SIP problem
\[
\left\{
\begin{aligned}
	h^*=\inf_{x\in\RR^2}&\
	h(x)=\frac{5}{8}x_1^2-4x_1+\frac{3}{4}x_1x_2-4x_2+\frac{5}{8}x_2^2+8\\
	\text{s.t.}&\
	1-y_1x_1-y_2x_2\ge 0,\ \forall y\in\{y\in\RR^2\mid y_1^2+y_2^2=1\}.
\end{aligned}\right.
\]
Clearly, $h(X)$ is s.o.s-convex.
It is easy to see that the feasible set is the closed unit disk around
the origin. If we let $f(X)=(X_1-2)^2+\frac{(X_2-2)^2}{4}$, then
$h(X)=f(\frac{(X_1-2)+(X_2-2)}{\sqrt{2}}+2,\frac{-(X_1-2)+(X_2-2)}{\sqrt{2}}+2)$.
Geometrically, for any $r>0$, the curve $h(X)=r$ can be obtained by rotating
the ellipse $f(X)=r$ around $(2,2)$ by
$45^{\circ}$ counterclockwise. Therefore, the minimizer is
$x^*=(\frac{\sqrt{2}}{2},\frac{\sqrt{2}}{2})$ with the minimum
$h^*=9-4\sqrt{2}\approx 3.3431$. 
Solving the SDP problem \eqref{eq::sosconvexsdpd},
%to get $h_k^{\sos}$,
we obtain $\lambda_1=3.3432$ with approximate minimizer $(0.7071,0.7071)$. 
\end{example}
\section{Conclusion}\label{sec::conclusion}
In this paper, a hierarchy of SDP relaxations for LSIPP problems is
presented. It can be seen as the dual of Lasserre's relaxations for GPM
problems and enjoys several desirable features. 
%we study a subclass of semi-infinite programming problems
%whose constraint functions are polynomials in parameters and
%index sets are basic semialgebraic sets (LSIPP problems). While the
%dual problems of LSIPP is a special case of GPM problems which has
%been well investigated in the literature, we construct a convergent
%hierarchy of SDP relaxations of LSIPP problems in the primal view
%based on Putinar's Positivstellensatz, which is of independent
%interest: (i) 
Some (approximate) minimizers of LSIPP problems can be
extracted using these SDP relaxations, which is useful in some
applications. Convergence rate
of these SDP relaxations is estimated using some existing results.
We also extend this SDP relaxation method to more general semi-infinite
programming problems.
%
%construct SDP relaxations 
%When the index set of an LSIPP problem is compact,
%a convergent hierarchy of SDP relaxations is constructed based on
%Putinar's Positivstellensatz.
%We extend this approach to the case when the index
%set is noncompact by the technique of homogenization. Applying
%%the proposed
%our method to the LSIPP reformulation of a polynomial
%optimization problem, we obtain a new hierarchy of SDP relaxations
%for solving the class of polynomial optimization problems
%whose objective polynomials are stably bounded from below on noncompact
%feasible sets.
%
%
%
%

\vspace{10pt}
\noindent{\bfseries Acknowledgments:}
The authors are very grateful for the comments of two anonymous
referees which helped to improve the presentation. 
The authors would like to thank Miguel A. Goberna for helpful comments
on the Lipschitz continuity of the optimal value function of
\eqref{eq::lsipp} used in Lemma~\ref{lem::perturbation}. 
Feng Guo is supported by
%``the Fundamental Research Funds for the Central Universities'',
the Chinese National Natural Science Foundation under grants 11401074, 11571350.
Xiaoxia Sun is supported by the Chinese National Natural
Science Foundation under grant 11801064, the
Foundation of Liaoning Education Committee under grant LN2017QN043.

\end{document}